\newtheorem{theorem}{Theorem}[section]
\newtheorem{lemma}[theorem]{Lemma}
\newtheorem{proposition}[theorem]{Proposition}
\newtheorem{definition}[theorem]{Definition}
\newtheorem{remark}[theorem]{Remark}
\newtheorem{question}[theorem]{Question}
\numberwithin{equation}{section}
\begin{document}

\baselineskip=15pt

\title[Brauer group and rationality of moduli spaces]{Brauer group and
birational type of moduli spaces of torsionfree sheaves on a nodal curve}

\author[U. N. Bhosle]{Usha N. Bhosle}

\address{School of Mathematics, Tata Institute of Fundamental
Research, Homi Bhabha Road, Mumbai 400005, India}

\email{usha@math.tifr.res.in}

\author[I. Biswas]{Indranil Biswas}

\address{School of Mathematics, Tata Institute of Fundamental
Research, Homi Bhabha Road, Mumbai 400005, India}

\email{indranil@math.tifr.res.in}

\subjclass[2000]{Primary 14H60; Secondary 14D20, 14F22}

\keywords{Vector bundle, nodal curve, Brauer group, rationality}

\date{}

\begin{abstract}
Let $U^{'s}_{\mathbb L}(n,d)$ be the moduli space of stable
vector bundles of rank $n$ and fixed determinant $\mathbb L$ of
degree $d$ on a nodal curve $Y$. The moduli space of semistable
vector bundles of rank $n$ and degree $d$ will be denoted by
$U'_Y(n,d)$. We calculate the Brauer groups of $U^{'s}_{\mathbb
L}(n,d)$. We study the question of rationality of
$U^{'s}_{\mathbb L}(n,d)$ and $U'_Y(n,d)$.
\end{abstract}

\maketitle

\section{Introduction}

The rationality and Brauer groups of moduli spaces of 
stable vector bundles with a fixed rank and determinant 
on a smooth curve have been investigated over the years
(\cite{N}, \cite{N2}, \cite{KS}, \cite{BBGN} to name a few). 
Here we consider these questions for moduli spaces
associated to irreducible nodal curves.

Let $k$ be an algebraically closed field of characteristic zero.
Let $Y$ be an irreducible reduced curve, defined over $k$, with
at most ordinary nodes as singularities. Let $\{y_j\}_{j=1}^\ell$
be the
nodes of $Y$. The arithmetic genus of $Y$ will be denoted by
$g_Y$. Let $X\, \longrightarrow\, Y$ be the normalization of $Y$.
The genus of $X$, which is called the
geometric genus of $Y$, will be denoted by $g_X$.

For integers $n\, \geq\,1$ and $d$, let $U_Y(n,d)$ denote the moduli
space of semistable torsionfree sheaves on $Y$ of rank $n$ and degree
$d$.
Let $U'_Y(n, d)$ (respectively, $U^{'s}_Y(n, d)$) be the
moduli space of semistable (respectively, stable) vector bundles
over $Y$ of rank $n$ and degree $d$. So
$U'_Y(n, d)$ and $U^{'s}_Y(n, d)$ are Zariski open subsets of $U_Y(n,d)$.
Fix a line bundle ${\mathbb L}\,\longrightarrow\, Y$ of
degree $d$. Let $U'_{\mathbb L}(n, d)$ (respectively, $U^{'s}_{\mathbb
L}(n, d)$) be the moduli space of semistable
(respectively, stable) vector bundles $E\, 
\longrightarrow\, Y$ of rank $n$ with $\bigwedge^n E\,=\,
{\mathbb L}$. We recall that
$U^{'s}_{\mathbb L}(n,d)$ is a nonsingular quasiprojective variety. 
Let
$$
{\mathbb P} \,\longrightarrow\, U^{'s}_{\mathbb L}(n,d)\times Y
$$
be the universal projective bundle. It may be clarified that although a
universal vector bundle rarely exists, the universal projective bundle always
exists and it is unique up to a unique
isomorphism. For a point $x\,\in\, Y$, let
$$
{\mathbb P}_x\,\longrightarrow\, U^{'s}_{\mathbb L}(n,d)
$$
be the projective bundle obtained by restricting ${\mathbb P}$ 
to $U^{'s}_{\mathbb L}(n,d)\times\{x\}$.
 
Let $h$ be the greatest common divisor of $n$ and $d$.

We first compute the Brauer group of $U^{'s}_{\mathbb 
L}(n,d)$.

\begin{theorem}\label{brgp}
Assume that $g_X\,\ge\, 2$; also, if $g_X=2\,=\,n$, then
assume that $d$ is odd. Then the Brauer group 
$$Br(U^{'s}_{\mathbb L}(n,d)) \,\cong\, 
{\mathbb Z}/h{\mathbb Z}\, ,$$
and it is generated by the Brauer class of ${\mathbb P}_x$.
\end{theorem}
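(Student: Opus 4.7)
The plan is to transfer the Brauer group computation on smooth curves, due to Balaji--Biswas--G\'omez--Newstead \cite{BBGN}, to the nodal setting via the normalization $\pi\colon X\to Y$ and the moduli theory of generalized parabolic bundles (GPBs) on $X$. The upper bound $h\cdot[\mathbb{P}_x]=0$ and the lower bound together with the identification of the generator will be handled separately, the latter being the more substantive part.

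For the upper bound I would argue exactly as in the smooth case. The class $[\mathbb{P}_x]$ has order dividing $n$ because $\mathbb{P}_x$ is a $PGL_n$-bundle; the fixed determinant hypothesis sharpens this to order dividing $h$. Concretely, writing $h=an+bd$ and using determinantal line bundles on $U^{'s}_{\mathbb{L}}(n,d)$, one constructs, after suitable twisting, an explicit reduction of $\mathbb{P}_x^{\otimes h}$ to a vector bundle. This step uses no smoothness of the underlying curve, only the existence of the universal projective bundle and of determinantal line bundles on the moduli, both of which are available in the nodal case.

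For the lower bound I would introduce a moduli space $\mathcal{M}$ of stable GPBs on $X$ of rank $n$, with determinant data compatible with $\pi^*\mathbb{L}$, whose generic member arises by pulling back a stable bundle on $Y$. There is a natural birational correspondence between $\mathcal{M}$ and $U^{'s}_{\mathbb{L}}(n,d)$, under which (up to twist by a line bundle, which does not alter the Brauer class) the universal projective bundle on $\mathcal{M}$ at a point of $X$ above $x$ corresponds to $\mathbb{P}_x$. Both spaces are smooth and quasiprojective, so by purity of the Brauer group an isomorphism $Br(\mathcal{M})\cong Br(U^{'s}_{\mathbb{L}}(n,d))$ would follow as soon as the exceptional locus of the correspondence has codimension at least $2$ in each side. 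A further birational reduction, again in codimension $\geq 2$, identifies $Br(\mathcal{M})$ with the Brauer group of the moduli of stable rank-$n$ bundles on the smooth curve $X$ with fixed determinant of degree $d$; by \cite{BBGN} this is $\mathbb{Z}/h\mathbb{Z}$, generated by the class of its universal projective bundle at a point. Tracing the chain of identifications then yields the theorem.

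The main obstacle, as expected, is the codimension bookkeeping required to invoke purity. One must estimate the dimensions of (i) the non-locally-free locus in $U^{'s}_{\mathbb{L}}(n,d)$, (ii) the locus in $\mathcal{M}$ of GPBs that descend to strictly semistable or non-locally-free sheaves, and (iii) the exceptional fibers of the birational map $\mathcal{M}\to U^{'s}_{\mathbb{L}}(n,d)$, and show each has codimension at least $2$ in the ambient smooth locus. The excluded case $g_X=n=2$ with $d$ even is precisely the boundary situation in which one of these codimensions drops to one and the purity argument breaks down.
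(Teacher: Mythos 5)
Your upper-bound argument (order of $[\mathbb{P}_x]$ divides $h$) is essentially sound and close to what the paper does via twisted sheaves: the class is killed by $n$ because it lifts to $H^2(-,\mu_n)$, and by $\chi=d+n(1-g_Y)$ because the direct image of the universal (twisted) bundle is a twisted bundle of that rank. But the lower bound and the identification of the generator, which is the heart of the theorem, rests on a step that does not exist as you describe it. The moduli space $\mathcal{M}$ of generalized parabolic bundles on $X$ is \emph{not} birational to the moduli of bundles on the smooth curve $X$, let alone isomorphic to it off codimension~$2$: it fibres over it with fibres of dimension $\ell n^2$ (at each node a Grassmannian $Gr(n,2n)$, generically a copy of $\mathrm{GL}_n$ of gluing data). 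So purity cannot identify $Br(\mathcal{M})$ with $Br$ of the moduli on $X$; you would instead have to analyse a genuinely nontrivial fibration whose total space is built from the universal bundle on $X$ --- and deciding whether the generator of $Br(M_X)$ survives pullback along such a fibration is precisely the kind of projective-bundle kernel computation you were hoping to avoid. A second, independent problem is the determinant: fixing $\det E=\mathbb{L}$ on $Y$ is strictly stronger than fixing $\det(\pi^*E)=\pi^*\mathbb{L}$ on $X$ (the fibre of $\pi^*\colon J_Y\to J_X$ is $(\mathbb{G}_m)^{\ell}$, reflecting the determinants of the gluings), so the locus you want is a codimension-$\ell$ closed subvariety of the GPB space with $X$-determinant fixed, and Brauer groups do not restrict well to closed subvarieties. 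Finally, even granting an abstract isomorphism with $\mathbb{Z}/h\mathbb{Z}$, matching the generator with $[\mathbb{P}_x]$ through your chain of correspondences is asserted rather than proved.

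The paper's route is quite different and avoids the normalization entirely. Using \cite{B3}, $U^{'s}_{\mathbb{L}}(n,d)$ is dominated by a Zariski-open subset $\mathcal{U}_P\subset\mathbb{P}^N$ whose complement has codimension $\ge 2$ (this is where the hypotheses $g_X\ge 2$, and $d$ odd when $g_X=n=2$, actually enter, via $\mathrm{Pic}(\mathcal{U}_P)=\mathbb{Z}$ --- not via a codimension drop in a GPB correspondence as you guessed). Pulling back along the projective bundle $\mathbb{P}_y\to U^{'s}_{\mathbb{L}}(n,d)$ and using $Br(\mathcal{U}_P)=0$ shows that $Br(U^{'s}_{\mathbb{L}}(n,d))$ equals the kernel of $Br(U^{'s}_{\mathbb{L}}(n,d))\to Br(\mathbb{P}_y)$, which by Gabber's theorem is cyclic, generated by $[\mathbb{P}_y]$. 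The exact order $h$ then follows from the divisibility bound above together with the Dr\'ezet--Narasimhan linearization argument showing no smaller multiple vanishes. If you want to repair your approach you would need, at minimum, a substitute for Gabber's theorem adapted to the GPB fibration and a treatment of the determinant torus; as written the proposal has a genuine gap.
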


In course of the proof of the above theorem, we show that
$U^{'s}_{\mathbb L}(n,d)$ is simply connected (see Lemma
\ref{lem-b1}).

We then investigate the birational class of the moduli spaces.

{}From the construction of $U_Y(n,d)$ it follows that there is a natural
element $\psi_{n,d}$ in the Brauer group of the function field of
$U_Y(n,d)$ (see Definition \ref{d14}).

Following \cite{KS}, a rational map of varieties $\phi\, :\, A\, 
\dasharrow\, B$
will be called \textit{birationally linear} if $\phi$ is a dominant 
rational map with generic fiber rational, i.e., the function field 
$k(A)$ is purely transcendental over $k(B)$.

\begin{theorem}\label{birational}
Assume that $g_Y\,\ge\, 2$.
\begin{enumerate}
\item There exists a birationally linear map 
$$
\mu\,:\, U_Y(n,d) \,\dasharrow\, U_Y(h,0)
$$
such that $\mu^*(\psi_{h,0})\, =\, \psi_{n,d}$.
Thus $U_Y(n,d)$ is birational to 
 $U_Y(h,0) \times {\mathbb A}^{(n^2-h^2)(g-1)}$. 
 \item If $h\,=\,1$, then $U'_{\mathbb L}(n,d)$ 
is rational for any line bundle ${\mathbb L}$.
 \item If $g_Y=2=n$, then $U'_{\mathbb L}(n,d)$ 
is rational.

\end{enumerate}
\end{theorem}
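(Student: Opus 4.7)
\textit{Part (1).} I adapt the King--Schofield construction \cite{KS} to torsion-free sheaves on the nodal curve $Y$. Write $n = hm$ and $d = hd'$ with $\gcd(m,d') = 1$. I first fix a sufficiently general stable vector bundle $V$ on $Y$ of rank $m$ and of a degree chosen via Riemann--Roch so that $\chi(V, E) = h$ for every $E \in U_Y(n,d)$. By exhibiting one pair $(V,E)$ with $\operatorname{Ext}^1(V,E) = 0$ (for instance, $E = V \otimes F$ with $F$ a polystable torsion-free sheaf of rank $h$ and degree $0$) and invoking semicontinuity, I conclude that $\operatorname{Hom}(V, E)$ has dimension $h$ and $\operatorname{Ext}^1(V,E)$ vanishes for generic $E \in U_Y(n,d)$. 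A canonical exact sequence
\[
0 \longrightarrow V \otimes \operatorname{Hom}(V,E) \longrightarrow E \longrightarrow F \longrightarrow 0
\]
(or its dual extension counterpart) then produces a torsion-free sheaf $F$ of rank $h$ and degree $0$, and I define $\mu(E) = F$. The generic fiber of $\mu$ is the parameter space of the defining data, a Zariski-open subset of an affine space of the expected dimension $(n^2 - h^2)(g_Y - 1)$, which gives birational linearity. The identity $\mu^*(\psi_{h,0}) = \psi_{n,d}$ follows from the tautological nature of the construction: the universal projective bundles on both sides are related through twisting by the fixed $V$, and twisting by a pulled-back vector bundle does not alter the Brauer class of the associated projective bundle.

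\textit{Main obstacle.} The delicate point is controlling the behaviour at the nodes of $Y$. On a smooth curve the sheaves are automatically locally free, and standard cohomological arguments give the Ext-vanishing and dimension count at once. On $Y$, I must verify that the generic object of $U_Y(n,d)$ is locally free, that the chosen $V$ remains sufficiently generic near the nodes, and that Ext-groups between torsion-free sheaves retain the expected dimension; Serre duality must be applied with the dualizing sheaf of $Y$ rather than the canonical bundle.

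\textit{Part (2).} With $h = 1$, the target $U_Y(1,0)$ is the generalized Jacobian $J^0(Y)$, and Part (1) yields a birationally linear map $U_Y(n,d) \dasharrow J^0(Y)$. Tracking determinants in the construction of $\mu$ shows that this map intertwines the determinant morphism $U_Y(n,d) \to J^d(Y)$ with a translation of the identity on $J^0(Y) \cong J^d(Y)$, so the determinant fibers correspond. Hence, for a generic $\mathbb{L}$, the fiber $U'_{\mathbb{L}}(n,d)$ is birational to an affine space of dimension $(n^2 - 1)(g_Y - 1)$ and is therefore rational. Since $J^0(Y)$ is divisible in characteristic zero, any two line bundles of degree $d$ differ by an $n$-th power, so tensoring by a suitable line bundle gives an isomorphism between the corresponding fixed-determinant moduli spaces, yielding rationality for every $\mathbb{L}$.

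\textit{Part (3).} When $g_Y = 2 = n$: if $d$ is odd then $h = 1$ and Part (2) applies. If $d$ is even, after tensoring by a line bundle I may assume $d = 0$ or $d = 2$, and I invoke the explicit description of rank-two moduli on genus-two curves. For smooth $Y$, Narasimhan--Ramanan proved $U'_{\mathbb{L}}(2,0) \cong \mathbb{P}^3$ and identified $U'_{\mathbb{L}}(2,2)$ with a smooth intersection of two quadrics in $\mathbb{P}^5$, both of which are rational. Analogous explicit descriptions extend to nodal $Y$ via direct construction (using existing work on rank-two moduli on nodal genus-two curves), giving the rationality conclusion in the remaining case.
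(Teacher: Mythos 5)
Your overall route---adapting King--Schofield \cite{KS} to the nodal case---is exactly the paper's, and your parts (2) and (3) essentially match it (for (3) with $d$ even the paper quotes the explicit description of $U^{'s}_{\mathbb L}(2,d)$ as an open subset of ${\mathbb P}^3$ from \cite{B2}; the intersection of two quadrics you mention is the odd-degree picture, which is anyway subsumed by part (2)). The problem is in the setup of part (1). If $V$ has rank $m=n/h$ and degree $e$, then by \eqref{be1}
$$\chi(V,E)\;=\;m\,d-n\,e-nm(g_Y-1)\;=\;hm\bigl(d'-e-m(g_Y-1)\bigr),$$
which is a multiple of $hm$ and so can equal $h$ only when $m=1$. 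Moreover, even granting $\dim{\rm Hom}(V,E)=h$, the sheaf $V\otimes{\rm Hom}(V,E)$ has rank $mh=n$, so in your exact sequence the quotient $F$ has rank $0$, not $h$: the single-step reduction you describe cannot produce an object of type $(h,0)$. In King--Schofield, and in the paper (see the type $(s,e)$ fixed before Proposition \ref{p3.2}), the auxiliary bundle has rank $s$ with $\chi((s,e),(n,d))=h$ and $n/h<s<2n/h$; the evaluation map ${\rm Hom}(F,E)\otimes F\to E$ is then \emph{surjective} with kernel $E_1$ of type $(sh-n,\,eh-d)$, and one reaches $U_Y(h,0)$ by iterating such steps (a Euclidean-algorithm-like descent), interleaved with Hecke modifications realized as parabolic structures at a smooth point. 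Birational linearity and the identity $\mu^*\psi_{h,0}=\psi_{n,d}$ are verified step by step along this descent; neither is ``tautological'' for the composite map.

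The genuinely nodal difficulty is also not quite the one you flag. The paper does not rework Serre duality with the dualizing sheaf; the key input that makes \cite[Lemma 2.1]{KS} survive on a nodal curve is \cite[Proposition 2.8(2)]{B5}, which guarantees that the kernel, image and torsionfree quotient of a general map between general vector bundles are again \emph{general vector bundles} (this is Lemma \ref{l3.1}), together with the restriction that all parabolic structures in the descent be supported at smooth points of $Y$. With these two modifications the King--Schofield proof is imported verbatim, which is all the paper does. So your plan is the right one in outline, but as written the construction of $\mu$ fails numerically and omits the iterative structure that carries the whole argument.
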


If the geometric genus $g_X$ is zero, then the following
two theorems hold:

\begin{theorem}\label{rational21}
Let $Y$ be a rational nodal curve.
\begin{enumerate}
	\item The compactified Jacobian ${\overline J}_Y$ 
of $Y$ is rational.
\item If $g_Y\,=\,1$, then $U_Y(n,d)$ is rational.
\item If $g_Y\,=\, 2$, and
$h\,\le\, 4$, then $U_Y(n,d)$ is rational.
\item Assume that $g_Y\,\ge\, 2$, $n\, \geq\, 2$, and
$n$ is coprime to $d$. Then $U_Y(n,d)$ is rational.
\end{enumerate} 
\end{theorem}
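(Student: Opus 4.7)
The plan is to prove the four assertions in order, combining explicit descriptions of torsionfree sheaves on a rational nodal curve with the birational reduction of Theorem \ref{birational}. For \textbf{(1)}, I would note that the generalized Jacobian sits in the short exact sequence
\[
1 \,\longrightarrow\, (k^*)^\ell \,\longrightarrow\, J_Y \,\longrightarrow\, J_X \,\longrightarrow\, 0,
\]
where $\ell$ is the number of nodes and $J_X$ is the Jacobian of the normalization $X$. Since $Y$ is rational, $X=\mathbb{P}^1$ and $J_X=0$, so $J_Y\cong(k^*)^\ell$ is a split algebraic torus. As $J_Y$ is Zariski-open and dense in $\overline{J}_Y$ and every split torus is a rational variety, $\overline{J}_Y$ is rational. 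Part \textbf{(4)} is then immediate from Theorem \ref{birational}(1) applied with $h=1$: the moduli space $U_Y(n,d)$ is birational to $U_Y(1,0)\times \mathbb{A}^{(n^2-1)(g_Y-1)}=\overline{J}_Y\times \mathbb{A}^{(n^2-1)(g_Y-1)}$, which is rational by \textbf{(1)}.

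For \textbf{(2)} the curve $Y$ is a nodal cubic; let $\pi:\mathbb{P}^1\to Y$ be the normalization, with $\pi^{-1}(\text{node})=\{p_1,p_2\}$. I would encode a generic semistable torsionfree sheaf $E$ of rank $n$ and degree $d$ on $Y$ by a pair $(E_X,\phi)$ consisting of a vector bundle $E_X$ on $\mathbb{P}^1$ of the most balanced splitting type and a linear gluing datum $\phi$ between the fibres at $p_1$ and $p_2$ (with the standard modification when $E$ is not locally free at the node). The space of such gluings is an open subset of the affine space $\mathrm{Mat}_n$, and $U_Y(n,d)$ is birationally the quotient of this affine space by the connected linear group $\mathrm{Aut}(E_X)$ acting linearly. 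Rationality of this quotient then follows from the no-name lemma.

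For \textbf{(3)}, Theorem \ref{birational}(1) reduces the statement to showing $U_Y(h,0)$ is rational for $h\in\{2,3,4\}$, the case $h=1$ being \textbf{(1)}. For $h=2$, Theorem \ref{birational}(3) gives rationality of the fixed-determinant space $U'_{\mathbb{L}}(2,0)$; combining this with \textbf{(1)} and analysing the determinant fibration $U_Y(2,0)\dasharrow \overline{J}_Y$, which is trivialised after base change along the squaring isogeny on the torus $J_Y$, yields rationality of $U_Y(2,0)$. For $h=3,4$, I would extend the strategy of \textbf{(2)} to the two-node case: realise $U_Y(h,0)$ birationally as a quotient of a product of two affine gluing spaces (one per node) by the automorphism group of a generic bundle on $\mathbb{P}^1$, and apply a no-name-lemma argument.

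The main obstacle lies precisely in the $h=3,4$ cases of \textbf{(3)}: one has to control the semistability stratification and verify that the resulting quotient of affine space by the automorphism group is indeed rational. The bound $h\le 4$ is what keeps the generic automorphism group and the linear action tractable enough for the no-name lemma to deliver rationality; for larger $h$ the automorphism group becomes too rigid relative to the gluing space for this direct argument to succeed.
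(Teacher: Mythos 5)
Your parts (1) and (4) line up with the paper: the paper quotes Igusa's description of the desingularized Jacobian as a product of $\mathbb{P}^1$'s rather than your torus extension, but either way $\overline{J}_Y$ is rational, and (4) is then exactly Theorem \ref{birational}(1) with $h=1$. The genuine gap is in (2) and (3), and in both cases it is the same one: you correctly reduce to a quotient of a space of gluing matrices by the automorphism group of the trivial (or balanced) bundle on $\mathbb{P}^1$ --- this reduction is the content of the paper's Theorem \ref{t4.3} --- but you then assert rationality of that quotient ``by the no-name lemma.'' The no-name lemma cannot do this: it compares $(V\times X)/G$ with $X/G\times\mathbb{A}^{\dim V}$ for a generically free linear action, i.e., it proves stable equivalence of quotients, and says nothing about the rationality of a single quotient $V/G$. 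For $g_Y=2$ the quotient you arrive at is precisely $(M_h\times M_h)/\!\!/\mathrm{PGL}_h$ under simultaneous conjugation, whose rationality is a hard theorem: classical for $h=2$ (the invariant field is generated by five traces), due to Formanek for $h=3$ and $h=4$, and \emph{open} in general (only stable rationality is known for $h=5,7$). If a no-name-lemma argument delivered rationality here, the problem would not be open for $h\ge 5$. The paper's proof of (3) is exactly your reduction plus the citation of Formanek; without that input your argument for $h=3,4$ has a hole. Your $h=2$ subcase via the determinant fibration and the squaring isogeny is also incomplete as stated: trivializing the fibration after an isogeny base change does not by itself descend to rationality of the total space.

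For (2) the paper takes a different and cleaner route: it invokes the identification $U_Y(n,d)\cong S^h(Y)$ of Hern\'andez Ruip\'erez et al., and $S^h(Y)$ is birational to $S^h(\mathbb{P}^1)=\mathbb{P}^h$. Your gluing-datum picture can be salvaged for $g_Y=1$ (the quotient is a single invertible matrix up to conjugation, rationalized by the characteristic polynomial --- again not by the no-name lemma), but note that for $h>1$ the generic semistable sheaf is only polystable, $U_Y(n,d)$ has dimension $h$ rather than $n^2(g_Y-1)+1=1$, and the identification must be made at the level of S-equivalence classes; your sketch does not address this.
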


\begin{theorem}\label{stablyrational}
Let $Y$ be a rational nodal curve with $g_Y\,\ge\, 1$. 
\begin{enumerate}
	\item For $h$ dividing $420= 3.4.5.7$, 
the moduli space $U_Y(n,d)$ is stably rational.
  \item If $h$ a prime number, then $U^{'s}_Y(n,d)$ is retract rational. 
\end{enumerate}
\end{theorem}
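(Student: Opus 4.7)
The strategy is to reduce both statements, via Theorem~\ref{birational}(1), to known rationality properties of quotients of tuples of matrices by $\mathrm{PGL}_h$ acting by simultaneous conjugation.

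First, by Theorem~\ref{birational}(1), $U_Y(n,d)$ is birational to $U_Y(h,0)\times \mathbb{A}^{(n^2-h^2)(g_Y-1)}$. Both stable rationality and retract rationality are birational invariants that descend through products with affine space, so part~(1) reduces to stable rationality of $U_Y(h,0)$ and part~(2) reduces to retract rationality of $U_Y(h,0)$ (equivalently of its Zariski open subset $U^{'s}_Y(h,0)$).

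Next, I would exhibit a birational model for $U_Y(h,0)$. Let $\pi\colon X=\mathbb{P}^1\to Y$ be the normalization, with $\pi^{-1}(y_j)=\{p_j,q_j\}$ for $j=1,\ldots,g_Y$. A generic stable bundle $E$ of rank $h$ and degree $0$ on $Y$ is locally free with $\pi^* E\cong \mathcal{O}_X^h$, since the trivial bundle is the generic splitting type in rank $h$ and degree $0$ on $\mathbb{P}^1$. Such an $E$ is reconstructed from gluing isomorphisms $\phi_j\in \mathrm{GL}_h$ at the nodes, and two tuples yield isomorphic bundles precisely when they lie in a common orbit of the global automorphism group $\mathrm{GL}_h$ of $\mathcal{O}_X^h$ acting by simultaneous conjugation. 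Since scalars act trivially, this yields
\[
U_Y(h,0)\,\sim\, (\mathrm{GL}_h)^{g_Y}/\mathrm{PGL}_h,
\]
which is birational to the classical quotient $M_h^{g_Y}/\mathrm{PGL}_h$ parametrising $g_Y$-tuples of $h\times h$ matrices up to simultaneous conjugation.

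Finally, I would invoke the classical rationality theorems for this quotient. For part~(1), the theorem of Bessenrodt--Le Bruyn asserts that $M_h^r/\mathrm{PGL}_h$ is stably rational for every divisor $h$ of $420=3\cdot 4\cdot 5\cdot 7$ and every $r\geq 1$; this gives stable rationality of $U_Y(n,d)$. For part~(2), Saltman's theorem gives retract rationality of $M_h^r/\mathrm{PGL}_h$ whenever $h$ is prime, giving retract rationality of $U^{'s}_Y(n,d)$.

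The main obstacle is the careful verification of the birational identification in Step~2. One must check that for $g_Y\geq 1$ a generic gluing tuple does produce a \emph{stable} bundle on $Y$, so that the $\mathrm{PGL}_h$-action has generically closed free orbits and a geometric quotient exists on a dense open set, and conversely that the locus where $\pi^* E$ is trivial is Zariski dense in $U_Y(h,0)$. The first follows from the existence of stable bundles of the prescribed rank and degree on $Y$ when $g_Y\geq 1$, and the second from semicontinuity of splitting types on $\mathbb{P}^1$.
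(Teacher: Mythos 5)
Your proposal follows essentially the same route as the paper: reduce to $U_Y(h,0)$ via Theorem~\ref{birational}(1), identify a dense open subset of $U^{'s}_Y(h,0)$ with $(\mathrm{GL}_h)^{g_Y}/\!\!/\mathrm{PGL}_h$ by pulling back to the normalization and recording the gluing isomorphisms at the nodes (this is exactly Proposition~\ref{p4.2} and Theorem~\ref{t4.3}, proved there by a dimension count on generalized parabolic sheaves), and then quote Bessenrodt--Le Bruyn for $h\mid 420$ and Saltman for $h$ prime.

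The one genuine gap is the case $g_Y=1$: Theorem~\ref{birational}(1) is only stated for $g_Y\ge 2$, and for $g_Y=1$ and $h\ge 2$ there are no stable bundles of rank $h$ and degree $0$ at all, so your claim that ``a generic gluing tuple does produce a stable bundle'' for all $g_Y\ge 1$ fails there and the birational model $(\mathrm{GL}_h)^{g_Y}/\mathrm{PGL}_h$ does not apply. The paper disposes of this case separately: for $g_Y=1$ one has $U_Y(n,d)\cong S^h(Y)$, which is birational to $\mathbb{P}^h$ (Lemma~\ref{l-3.1}), hence rational, and a fortiori stably rational and retract rational. With that case split in, your argument matches the paper's.
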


\noindent
\textbf{Acknowledgements.} We thank N. Hoffman for pointing out the
reference \cite{BB}. We also thank the referee for helpful comments.

\section{The Brauer group}

We continue with the notation set up in the introduction.
In this section, we assume that $g_X\, \geq\, 2$. 
If $g_X\, =\, 2\, =\, n$, then we
assume that $d$ is odd. Our first aim is to prove
Theorem \ref{brgp}.

\subsection{Generator of the Brauer group}\label{cyclic}

We first recall some results from \cite{B3}. 
Without loss of generality, we may assume that 
$d>>0$ (by tensoring with a line bundle of sufficiently
large degree).
Then there is a projective space ${\mathbb P}^N$ with a family 
of vector bundles
\begin{equation}\label{eb2}
{\mathcal E}^0\, \longrightarrow\, {\mathbb P}^N\times Y\, ,
\end{equation}
of rank $n$ and with a fixed determinant $\mathbb{L}$ of degree $d$ 
such that for all $t\in {\mathbb P}^N$, the vector bundle
$\mathcal{E}^0_t\, :=\, {\mathcal E}^0\vert_{\{t\}\times Y}$
contains the trivial bundle of rank $n-1$ as subbundle; also,
all stable vector bundles of rank $n$ and determinant
$\mathbb L$ occur in this family
(see the proof of \cite[Proposition 2.3]{B3} for details). 
Let 
\begin{equation}\label{eb1}
{\mathcal U}_P\, \subset\, {\mathbb P}^N
\end{equation}
be the nonempty Zariski open subset corresponding 
to stable vector bundles in the family 
${\mathcal E}^0$. There is a natural surjective morphism
\begin{equation}\label{qu}
{\mathcal U}_P\, \longrightarrow\, {U'}^s_{\mathbb L}(n,d)
\end{equation}
(see \cite[p. 256]{B3}); the homomorphism
$\text{Pic}({U'}^s_{\mathbb L}(n,d))\, \longrightarrow\, 
\text{Pic}({\mathcal U}_P)$ induced by the morphism in \eqref{qu}
is in fact an injection, and under the above assumptions on the 
geometric genus, we have
$$
\text{Pic}({U'}^s_{\mathbb L}(n,d))\,=\,\text{Pic}
({\mathcal U}_P)\,=\, {\mathbb Z}
$$
\cite[Theorem I(1)]{B3}. Consequently, the codimension of the
complement ${\mathbb P}^N\setminus {\mathcal U}_P$ 
is strictly greater than one.

Fix a smooth point $y\, \in\, Y$. Let
$$
{\mathbb P}_{\rm univ}\, \longrightarrow\, {U'}^s_{\mathbb 
L}(n,d)\times Y
$$
be the universal projective bundle. Let
\begin{equation}\label{e0}
f\, :\, {\mathbb P}_y\, :=\, {\mathbb P}_{\rm univ}\vert_{
{U'}^s_{\mathbb L}(n,d)\times\{y\}}\,\longrightarrow\, {U'}^s_{\mathbb L}(n,d)
\end{equation}
be the projective bundle obtained by restricting ${\mathbb P}_{\rm 
univ}$.

There exists a natural universal vector bundle
\begin{equation}\label{eb3}
{\mathcal E}\, \longrightarrow\, {\mathbb P}_y\times Y\, .
\end{equation}
The following argument showing that $\mathcal E$ exists is due to
the referee.

Realize ${\mathbb P}_y$ as parabolic moduli space with parabolic structure
at $y$; there is exactly one nonzero parabolic weight, and its multiplicity 
is one. Therefore, there is a universal parabolic vector bundle over
${\mathbb P}_y\times Y$ \cite[p. 465, Proposition 3.2]{BY}. The vector bundle
$\mathcal E$ in \eqref{eb3} can be taken to be the vector
bundle underlying a universal parabolic bundle on ${\mathbb P}_y\times Y$.

Let
\begin{equation}\label{ph2}
\phi\, :\, {\mathbb P}_y\times Y\,\longrightarrow\,
{\mathbb P}_y
\end{equation}
be the natural projection.
For $u\,\in\, {\mathbb P}_y$, the vector bundle
${\mathcal E}\vert_{u\times Y}$ is represented by
$f(u) \,\in\, U_L^s(n,d)$. As $d>>0$ (this assumption was
imposed), the direct image 
$\phi_*{\mathcal E}\, \longrightarrow\,
{\mathbb P}_y$ is locally free, hence
$\phi_*{\mathcal E}$
is trivial on a nonempty Zariski open subset
$$
\widetilde{U}\, \subset\, {\mathbb P}_y\, .
$$
Using this trivialization, we have
${\mathcal O}_U^{n-1} \subset {\mathcal E}\vert_{U\times Y}$
on some suitable nonempty Zariski open subset $U\, \subset\,
\widetilde{U}$. Thus there is a morphism
$$
\psi\, :\, U\, \longrightarrow\, {\mathcal U}_P\, ,
$$
where ${\mathcal U}_P$ is the open subset in \eqref{eb1},
such that the family ${\mathcal E}\vert_{U\times Y}$ (see
\eqref{eb3}) is isomorphic to the pulled back family
$$
(\psi\times\text{Id}_Y)^*{\mathcal E}^0\, \longrightarrow\,
U\times Y
$$
(see \eqref{eb2} for ${\mathcal E}^0$).

We have a commutative diagram
\begin{equation}\label{eb4}
\begin{matrix}
U & \hookrightarrow & {\mathbb P}_y\\
~\Big\downarrow\psi && ~ \Big\downarrow f\\
{\mathcal U}_P& \longrightarrow & {U'}^s_{\mathbb L}(n,d)\\
\end{matrix}
\end{equation}
($f$ is defined in \eqref{e0}). Let
\begin{equation}\label{fb0}
\begin{matrix}
Br({U'}^s_{\mathbb L}(n,d)) & \longrightarrow &
Br({\mathcal U}_P)\\
~ \Big\downarrow a && \Big\downarrow \\
Br({\mathbb P}_y) & \stackrel{b}{\longrightarrow} & Br(U)
\end{matrix}
\end{equation}
be the corresponding diagram of homomorphisms of Brauer groups.
Since ${\mathcal U}_P$ is a Zariski open subset of 
${\mathbb P}^N$ such that the codimension of the
complement ${\mathbb P}^N\setminus {\mathcal U}_P$ is at least two,
we have $Br({\mathcal U}_P)\,=\, Br({\mathbb P}^N)\,=\,
0$. Consequently, from \eqref{fb0} we conclude
that $b\circ a\,=\, 0$.
The homomorphism $b$ is injective because
$U$ is a Zariski open subset of ${\mathbb P}_y$.
Therefore,
\begin{equation}\label{ka}
Br({U'}^s_{\mathbb L}(n,d))\, =\, \text{kernel}(a)\, .
\end{equation}

Let
\begin{equation}\label{eb6}
\beta\, \in\, Br({U'}^s_{\mathbb L}(n,d))
\end{equation}
be the class of
the projective bundle ${\mathbb P}_y\,\longrightarrow\,
{U'}^s_{\mathbb L}(n,d)$. It is known that the kernel of the
homomorphism $a$ in \eqref{fb0} is generated by $\beta$
(see \cite[p. 193, Theorem 2]{Ga}).

\subsection{Period of the Brauer class}

We will prove that the order of the class $\beta$ in \eqref{eb6}
is the greatest common divisor of $n$ and $d$.

\begin{lemma}\label{lem-b2}
The order of $\beta$ (see \eqref{eb6}) divides
$h\,:=\,{\rm g.c.d.}(n,d)$.
\end{lemma}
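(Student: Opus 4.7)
My plan is to bound the order of $\beta$ by exhibiting Azumaya algebras of class $\beta$ on ${U'}^s_{\mathbb L}(n,d)$ of two different ranks whose greatest common divisor is $h$. The key fact I will invoke is that if an Azumaya algebra of rank $r^2$ with class $\beta$ exists, then the period of $\beta$ divides $r$; this follows from the connecting map
$$H^1(PGL_r)\,\longrightarrow\, H^2(\mu_r)\,\longrightarrow\, Br({U'}^s_{\mathbb L}(n,d))[r]$$
associated with $1\, \to\, \mu_r \,\to\, SL_r \,\to\, PGL_r \,\to\, 1$. So if representatives of ranks $r_1^2$ and $r_2^2$ are produced, then the period divides $\gcd(r_1, r_2)$. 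I would take $r_1\, =\, n$ and $r_2\, =\, \chi\, :=\, d + n(1 - g_Y)$, giving $\gcd(r_1, r_2)\, =\, \gcd(n, d)\, =\, h$.

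For the first representative, I would use $\mathcal E_y\, :=\, \mathcal E|_{\mathbb P_y\times\{y\}}$, a rank-$n$ bundle on $\mathbb P_y$. Because $\mathbb P_y$ parametrizes pairs $(E, L)$ with $L\,\subset\, E_y$ a line, the fiber of $\mathcal E_y$ at $(E, L)$ is $E_y$, which does not depend on $L$. Hence $\mathcal E_y$ restricts to a trivial rank-$n$ bundle on each fiber of $f\, :\, \mathbb P_y \,\to\, {U'}^s_{\mathbb L}(n,d)$. The endomorphism bundle $\mathrm{End}(\mathcal E_y)$ then has a natural $PGL_n$-equivariant structure for the torsor $f$ (scalars act trivially on endomorphisms), so by descent it comes from an Azumaya algebra $\mathcal A$ on ${U'}^s_{\mathbb L}(n,d)$ of rank $n^2$. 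A local cocycle calculation on étale trivializations of $\mathbb P_y$ identifies the Brauer class of $\mathcal A$ with $\beta$: the obstruction to assembling the local rank-$n$ trivializations of $\mathcal E_y$ into a global vector bundle on ${U'}^s_{\mathbb L}(n,d)$ is precisely $\beta$.

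For the second representative, since $d\,\gg\, 0$, the direct image $\phi_*\mathcal E$ (with $\phi$ as in \eqref{ph2}) is locally free of rank $\chi\, =\, d + n(1-g_Y)$ on $\mathbb P_y$, with fiber $H^0(Y, E)$ at $(E, L)$. This fiber is again independent of $L$, so $\phi_*\mathcal E$ is trivial on each fiber of $f$. The same descent argument produces an Azumaya algebra $\mathcal B$ on ${U'}^s_{\mathbb L}(n,d)$ of rank $\chi^2$ and class $\beta$.

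Putting the two together, the period of $\beta$ divides both $n$ and $\chi$, hence divides $\gcd(n, \chi)\, =\, \gcd(n, d)\, =\, h$, which is the statement of the lemma. The main point requiring care is the descent step, namely the verification that $\mathrm{End}(\mathcal E_y)$ (resp. $\mathrm{End}(\phi_*\mathcal E)$) descends to an Azumaya algebra on ${U'}^s_{\mathbb L}(n,d)$ with Brauer class exactly $\beta$. This is a standard argument for Azumaya algebras along a $PGL_n$-torsor, but is the least mechanical part; I would unwind the transition cocycles of $\mathcal E_y$ on overlaps of étale local trivializations of $f\, :\, \mathbb P_y \,\to\, {U'}^s_{\mathbb L}(n,d)$ and match the resulting $\mathbb G_m$-valued cocycle to the definition of $\beta$.
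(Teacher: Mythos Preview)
Your proof is correct and follows essentially the same route as the paper: exhibit bundles of ranks $n$ and $d+n(1-g_Y)$ (the fibre at $y$ and the direct image of a universal family) whose endomorphism algebras descend to Azumaya algebras of class $\beta$ on $U'^s_{\mathbb L}(n,d)$, so the period divides $\gcd(n,\,d+n(1-g_Y))=h$. The only difference is packaging: the paper works on the $\mu_n$--gerbe ${\mathcal U}'^s_{\mathbb L}(n,d)\to U'^s_{\mathbb L}(n,d)$, where the descent of $\mathrm{End}$ and the identification of the resulting class with $\beta$ are automatic, whereas you work on $\mathbb P_y$ and must supply these by hand---in doing so, note that $f$ is a Brauer--Severi scheme rather than a $\mathrm{PGL}_n$--torsor, and that ``the fibre $E_y$ is independent of $L$'' does not by itself force $\mathcal E_y$ to be trivial on the fibres $\mathbb P(E_y)$ (it could be twisted by some $\mathcal O(m)$), though $\mathrm{End}(\mathcal E_y)$ is insensitive to such a twist, so your conclusion is unaffected.
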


\begin{proof}
Let $\mu_n$ be the cyclic group
consisting of the $n$--th roots of unity.
Since $\beta$ is the class of a projective bundle of relative
dimension $n-1$, it lies in the image of the homomorphism
$$
H^2({U'}^s_{\mathbb L}(n,d),\, \mu_n)\,\longrightarrow\,
H^2({U'}^s_{\mathbb L}(n,d),\, {\mathbb G}_m)\, .
$$
Hence the order of $\beta$ divides $n$.

Let ${{\mathcal U}'}^s_{\mathbb L}(n,d)$ be the moduli stack of 
stable vector bundles $E\, \longrightarrow \,Y$ of rank $n$
with $\bigwedge^n E\,=\, {\mathbb L}$.
Let
\begin{equation}\label{eb7}
{{\mathcal U}'}^s_{\mathbb L}(n,d)\, \longrightarrow\,
{U'}^s_{\mathbb L}(n,d)
\end{equation}
be the coarse moduli space. We note that
${{\mathcal U}'}^s_{\mathbb L}(n,d)$ is a gerbe over
${U'}^s_{\mathbb L}(n,d)$ banded by the group $\mu_n$.
We recall that a gerbe on ${U'}^s_{\mathbb L}(n,d)$
defines an element of $Br({U'}^s_{\mathbb L}(n,d))$
which can be constructed as follows. For any vector bundle
$W$ on the gerbe, the endomorphism bundle $End(W)$ descends
to ${U'}^s_{\mathbb L}(n,d)$ as an Azumaya algebra bundle.
The Brauer class of this Azumaya algebra bundle is independent
of the choice of $W$. (A vector bundle on a $\mu_n$--gerbe
over ${U'}^s_{\mathbb L}(n,d)$ is also called a \textit{twisted
vector bundle} on ${U'}^s_{\mathbb L}(n,d)$.)

The restriction to ${{\mathcal
U}'}^s_{\mathbb L}(n,d)\times\{y\}$ of the
universal vector bundle
$$
{\mathcal E}'\, \longrightarrow\, {{\mathcal
U}'}^s_{\mathbb L}(n,d)\times Y\, .
$$
is a vector bundle on the gerbe ${{\mathcal U}'}^s_{\mathbb L}(n,d)$.
So, the class of the gerbe in \eqref{eb7} is $\beta$.

Let $q_i$, $i\,=\,1\, ,2$, be the natural projection of
${{\mathcal U}'}^s_{\mathbb L}(n,d)\times Y$ to the $i$--th
factor. Since $d>>0$, we know that $H^1(Y,E)\,=\, 0$ for all $E\,\in\,
{U'}^s_{\mathbb L}(n,d)$. Therefore, the direct image
bundle
\begin{equation}\label{cV}
{\mathcal V}\, :=\, q_{1*}({\mathcal E}')
\, \longrightarrow\,{{\mathcal U}'}^s_{\mathbb L}(n,d)
\end{equation}
is a vector bundle on the gerbe. Hence, the Azumaya algebra bundle
$End({\mathcal V})$ on ${U'}^s_{\mathbb L}(n,d)$ is represented
by $\beta$. Consequently, the order of $\beta$ divides the rank of 
${\mathcal V}$. From Riemann--Roch, the rank of
${\mathcal V}$ is $d+n(1-g_Y)$.
Hence the order of $\beta$ divides $d$ (recall that it divides $n$).
Therefore, the order of $\beta$ divides $h$.
\end{proof}

\subsection{Proof of Theorem \ref{brgp}}
Express ${U'}^s_{\mathbb L}(n,d)$ as a GIT quotient
$R^{s}/\!/\text{GL}_q(k)$ in the standard way.
It is known that there is a $\text{GL}_q(k)$--linearized
line bundle over $R^{s}$ with the center ${\mathbb G}_m\,\subset\,
\text{GL}_q(k)$ acting as multiplication by $t^j$,
$t\, \in\, {\mathbb G}_m$, if and only if $j$ is a multiple of
$h$ (the proof given in \cite{DN} for smooth curves goes through
without any change). Therefore for $h>1$, if $c\, \in\, [1\, , h-1]$, 
then $c\beta\, \not=\, 0$. This implies that
the order of $\beta$ is not smaller than $h$. 
Now from Lemma \ref{lem-b2} we conclude that the
order of $\beta$ is $h$. This, together with the earlier
established fact that $\beta$ is a generator 
of $Br({U'}^s_{\mathbb L}(n,d))$,
completes the proof of Theorem \ref{brgp}.

\subsection{Simply connectedness}

\begin{lemma}\label{lem-b1}
The variety ${U'}^s_{\mathbb L}(n,d)$ is simply connected.
\end{lemma}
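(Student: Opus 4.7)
The plan is to exploit the surjective morphism $q\colon\mathcal{U}_P\to{U'}^s_{\mathbb L}(n,d)$ from \eqref{qu} that was already central in the computation of the Brauer group. We will show (a) that $\mathcal{U}_P$ itself is simply connected, and (b) that $q$ induces a surjection on \'etale fundamental groups; combined, these force $\pi_1({U'}^s_{\mathbb L}(n,d))=1$.

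For (a), it was noted just after \eqref{qu} that $\mathcal{U}_P$ is a Zariski open subset of the smooth projective variety $\mathbb{P}^N$ whose complement has codimension strictly greater than one. Since $\mathbb{P}^N$ is simply connected, Zariski--Nagata purity (removing a closed subset of codimension at least two from a smooth variety in characteristic zero leaves the \'etale fundamental group unchanged) gives $\pi_1(\mathcal{U}_P)\cong\pi_1(\mathbb{P}^N)=1$.

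For (b), we must identify the fibers of $q$. Each bundle in the family ${\mathcal E}^0$ of \eqref{eb2} carries a distinguished inclusion of the trivial bundle of rank $n-1$ as a subbundle, so a point of $\mathcal{U}_P$ lying over a stable $E$ records a subbundle inclusion ${\mathcal O}_Y^{n-1}\hookrightarrow E$ (up to the scaling built into the projectivization producing $\mathbb{P}^N$). Such inclusions form a nonempty Zariski open subset of the affine space $H^0(Y,E)^{n-1}$, so each fiber of $q$ is irreducible and therefore geometrically connected. By the standard fact that a dominant morphism of finite type between irreducible varieties over an algebraically closed field of characteristic zero with geometrically connected generic fiber induces a surjection on \'etale $\pi_1$, it follows that $\pi_1({U'}^s_{\mathbb L}(n,d))$ is a quotient of $\pi_1(\mathcal{U}_P)=1$, completing the proof.

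The main technical obstacle is the clean identification of the fibers of \eqref{qu}: the construction of $\mathbb{P}^N$ in \cite{B3} involves a projectivization (and potentially a quotient by the automorphism group of $\mathcal{O}_Y^{n-1}$ acting on the trivial subbundle), and one must verify that after these operations the fibers are still geometrically connected, equivalently that the locus of $(n-1)$-tuples of sections of $E$ defining a subbundle inclusion is irreducible modulo this action. Once this point is settled the remainder of the argument is entirely standard.
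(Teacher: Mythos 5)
Your step (a) coincides with the paper's: $\pi_1(\mathcal{U}_P)=\pi_1(\mathbb{P}^N)=1$ by codimension-two purity. Your step (b), however, diverges from the paper and is exactly where your argument is incomplete. You want surjectivity of $\pi_1(\mathcal{U}_P)\to\pi_1({U'}^s_{\mathbb L}(n,d))$ via geometric connectedness of the (generic) fiber of \eqref{qu}, but the identification of that fiber is the point you yourself flag as unresolved: a point of $\mathbb{P}^N$ is not an inclusion $\mathcal{O}_Y^{n-1}\hookrightarrow E$ but (roughly) an extension class modulo scalars, so the fiber over $E$ is a quotient of the locus of good inclusions in $H^0(Y,E)^{\oplus(n-1)}$ by automorphisms of the sub- and quotient objects and of $E$; one must also handle the nodal-curve subtlety that the quotient of a generically injective map $\mathcal{O}_Y^{n-1}\to E$ need only be torsionfree rather than locally free at the nodes. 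The connectedness claim is plausible (an open subset of an affine space remains irreducible after passing to such a quotient), but as written it is an assertion, not a proof, and it depends on details of the construction in \cite{B3} that you have not pinned down.

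The paper's proof is designed precisely to avoid this fiber analysis. It uses the commutative square \eqref{eb4}: the open subset $U$ of the projective bundle $\mathbb{P}_y$ from \eqref{e0}, equipped with the map $\psi\colon U\to\mathcal{U}_P$ obtained by trivializing $\phi_*\mathcal{E}$. Then $\pi_1(U)\twoheadrightarrow\pi_1(\mathbb{P}_y)$ because $U$ is Zariski open in the smooth variety $\mathbb{P}_y$, and $\pi_1(\mathbb{P}_y)\stackrel{\sim}{\to}\pi_1({U'}^s_{\mathbb L}(n,d))$ because $f$ is a projective bundle with simply connected fibers; since the composite $\pi_1(U)\to\pi_1(\mathcal{U}_P)\to\pi_1({U'}^s_{\mathbb L}(n,d))$ is trivial, simple connectedness follows with no information about the fibers of \eqref{qu} at all. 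If you wish to keep your route, you must actually prove the irreducibility of the generic fiber of $q$ from the explicit construction of $\mathbb{P}^N$; otherwise, replace step (b) by the detour through $\mathbb{P}_y$ already available from Section 2.1.
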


\begin{proof}
Consider the diagram of homomorphisms of fundamental groups
\begin{equation}\label{fb1}
\begin{matrix}
\pi_1(U) & \stackrel{p}{\longrightarrow} & \pi_1({\mathbb P}_y)\\
~\Big\downarrow q && ~\Big\downarrow r\\
\pi_1({\mathcal U}_P) & \longrightarrow &
\pi_1({U'}^s_{\mathbb L}(n,d))\\
\end{matrix}
\end{equation}
given by \eqref{eb4}. First note that
$\pi_1({\mathcal U}_P)\,=\,\pi_1({\mathbb P}^N)
\,=\, e$ because the codimension of ${\mathbb P}^N\setminus
{\mathcal U}_P$ is at least two. Hence
$$
r\circ p\,=\, 0\, .
$$
The homomorphism $p$ is surjective because $U\, \subset \,
{\mathbb P}_y$ is Zariski open. Since the fibers of the projection
$f$ in \eqref{e0} are $1$--connected, from the long exact sequence
of fundamental groups for the fibration $f$ we conclude that the
homomorphism $r$ in \eqref{fb1} is an isomorphism. Hence we
conclude that ${U'}^s_{\mathbb L}(n,d)$ is simply connected.
\end{proof}

\section{Birational type of moduli spaces}

The explicit descriptions of the moduli spaces for 
low ranks and genera enable us to determine their birational 
classification. 

\subsection{Cases of low ranks and genera}

\begin{lemma}\label{l-3.1}
The moduli space $U_Y(n,d)$ of semistable torsionfree sheaves 
of rank $n$ and degree $d$ on a rational nodal 
curve $Y$ of arithmetic genus $1$ is rational.
\end{lemma}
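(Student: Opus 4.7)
The plan is to reduce the statement to the coprime case via $S$--equivalence, and then exploit the fact that the compactified Jacobian of a rational nodal curve of arithmetic genus one is isomorphic to $Y$ itself, which is a rational curve.

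Write $h\,=\,{\rm g.c.d.}(n,d)$, $n\,=\,hn'$ and $d\,=\,hd'$ with ${\rm g.c.d.}(n',d')\,=\,1$. First I would handle the coprime case. When ${\rm g.c.d.}(n',d')\,=\,1$, every semistable torsionfree sheaf of rank $n'$ and degree $d'$ on $Y$ is stable, and the determinant morphism
$$
\det\,:\,U_Y(n',d')\,\longrightarrow\,\overline J_Y(d')
$$
is a dominant map between irreducible $1$--dimensional varieties, the general fibers being $0$--dimensional since $(n'^2-1)(g_Y-1)\,=\,0$. An Atiyah-type argument, adapting the classification of stable vector bundles on a smooth elliptic curve to the rational nodal curve of genus one, shows that the generic fiber of $\det$ is a single reduced point, so $\det$ is birational. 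Since $\overline J_Y(d')$ is classically isomorphic to $Y$ (realized as ${\mathbb P}^1$ with two points identified), which is rational, so is $U_Y(n',d')$.

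For the general case with $h\,>\,1$, there are no stable torsionfree sheaves of rank $n$ and degree $d$ on $Y$; every semistable such sheaf is $S$--equivalent to a polystable sheaf $E_1\oplus\cdots\oplus E_h$ with each $E_i$ stable of rank $n'$ and degree $d'$, and the generic polystable sheaf has the $E_i$ pairwise nonisomorphic. This gives a birational identification
$$
U_Y(n,d)\,\sim\,{\rm Sym}^h\bigl(U_Y(n',d')\bigr)\, ,
$$
and together with the previous step this shows that $U_Y(n,d)$ is birational to ${\rm Sym}^h({\mathbb P}^1)\,=\,{\mathbb P}^h$, hence rational.

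The main obstacle is the Atiyah-type identification of $U_Y(n',d')$ with $\overline J_Y(d')$ in the coprime case. This should be handled either by invoking the first author's earlier results on moduli of torsionfree sheaves on nodal curves via generalized parabolic bundles on the normalization $X\,=\,{\mathbb P}^1$, or by a direct description: the pullback to ${\mathbb P}^1$ of a generic stable torsionfree sheaf is a balanced direct sum of line bundles, and the gluing data at the two preimages of the node modulo automorphisms of this pullback yields a $1$--parameter family mapping birationally onto $\overline J_Y(d')$.
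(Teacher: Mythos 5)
Your proposal is correct and follows essentially the same route as the paper: the paper simply quotes the isomorphism $U_Y(n,d)\,\cong\, S^h(Y)$ from \cite[Remark 1.27]{HLST} (which packages your coprime-case identification $U_Y(n',d')\cong \overline J_Y\cong Y$ together with the $S$--equivalence decomposition into $h$ stable summands) and then concludes, exactly as you do, from $S^h({\mathbb P}^1)\,=\,{\mathbb P}^h$. The only difference is that you sketch a proof of that identification where the paper cites it.
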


\begin{proof}
Let $S^h(Y)$ be the $h$--th symmetric product of $Y$, in other
words, 
$$
S^h(Y)\,= \,\prod_{i=1}^h Y_i/\Sigma_h\, ,
$$
where $Y_i$ is a copy of $Y$, and $\Sigma_h$ is the 
group of permutations of $\{1,\cdots ,h\}$. 
The moduli space $U_Y(n,d)$ is isomorphic to $S^h(Y)$,
where $h\, =\, \text{g.c.d.}(n,d)$ \cite[Remark 1.27]{HLST}.
Hence it suffices to show that $S^h(Y)$ is rational.

Since $Y$ is birational to ${\mathbb P}^1_k$, the symmetric
product $S^h(Y)$ is birational to $S^h({\mathbb P}^1_k)$.
But $S^h({\mathbb P}^1_k)\, =\, |{\mathcal O}_{{\mathbb P}^1_k}(h)|
\,=\, {\mathbb P}^h_k$. Hence the lemma follows.
\end{proof}

\begin{lemma}
If $g_Y\,=\,2$ and $n\,=\,2$, then the moduli space
$U^{'s}_{\mathbb L}(n,d)$ is rational. 
\end{lemma}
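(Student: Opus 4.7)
The plan is to split on the parity of $d$ and reduce the even case to the odd case via a Hecke transformation at a smooth point of $Y$. When $d$ is odd, $h=\gcd(2,d)=1$ and Theorem \ref{birational}(2) immediately yields rationality of $U^{'s}_{\mathbb L}(2,d)$, so the interesting case is $d$ even.

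Assume now that $d$ is even. I would first normalize the determinant: the connected algebraic group $\text{Pic}^0(Y)$ is an extension of the abelian variety $\text{Jac}(X)$ by a torus $({\mathbb G}_m)^\ell$, hence divisible, so any line bundle of even degree on $Y$ can be written as $N^{\otimes 2}$ for some $N\in\text{Pic}(Y)$. Tensoring with $N^{-1}$ then yields an isomorphism $U^{'s}_{\mathbb L}(2,d)\cong U^{'s}_{{\mathcal O}_Y}(2,0)$, so we may assume $\mathbb L={\mathcal O}_Y$ and $d=0$.

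Picking a smooth point $y\in Y$, form the Hecke correspondence
$$
\mathcal H \,:=\, \bigl\{\, (E,E'')\;:\; E\subset E'',\ E''/E\cong k_y \,\bigr\}
\;\subset\; U^{'s}_{{\mathcal O}_Y}(2,0)\,\times\, U^{'s}_{{\mathcal O}_Y(y)}(2,1).
$$
Both natural projections are generically $\mathbb{P}^1$-bundles, the fiber over $E$ on the left being $\mathbb P(E_y)$. Since $\gcd(2,1)=1$, the target $U^{'s}_{{\mathcal O}_Y(y)}(2,1)$ carries a universal vector bundle and the right-hand projection is Zariski locally trivial; as this target is rational by the first paragraph, $\mathcal H$ is rational as well.

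The main obstacle is descending rationality from $\mathcal H$ to $U^{'s}_{{\mathcal O}_Y}(2,0)$: the left-hand projection is a $\mathbb P^1$-bundle whose Brauer class equals the restriction at $y$ of the universal projective bundle, i.e.\ the class $\beta$ studied in Section 2, which has order dividing $2$ by Lemma \ref{lem-b2}. To circumvent this I would construct a rational section, for instance by choosing for generic $E$ a canonical line $\ell\subset E_y$ arising as the image at $y$ of a distinguished maximal sub-line bundle of $E$ (the Segre-invariant analysis of Lange--Narasimhan making this choice rationally defined). Once such a section exists, the $\mathbb P^1$-bundle becomes birationally trivial, so $\mathcal H$ is birational to $U^{'s}_{{\mathcal O}_Y}(2,0)\times\mathbb P^1$, and the rationality of $\mathcal H$ forces that of $U^{'s}_{{\mathcal O}_Y}(2,0)$.
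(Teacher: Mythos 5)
Your odd-$d$ case is exactly the paper's: $h=1$ and Theorem \ref{birational}(2) applies. The problem is the even-$d$ case, where your argument has a genuine gap at precisely the step you flag as ``the main obstacle.'' The left-hand projection of your Hecke correspondence is (up to duality) the projective bundle ${\mathbb P}_y\,\longrightarrow\, U^{'s}_{{\mathcal O}_Y}(2,0)$ of Section 2, whose Brauer class is $\beta$. A ${\mathbb P}^1$-bundle over a smooth variety admits a rational section if and only if its class dies in the Brauer group of the function field, and since $U^{'s}_{{\mathcal O}_Y}(2,0)$ is smooth the restriction $Br(U^{'s}_{{\mathcal O}_Y}(2,0))\,\longrightarrow\, Br(k(U^{'s}_{{\mathcal O}_Y}(2,0)))$ is injective. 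But the Dr\'ezet--Narasimhan weight argument used in the proof of Theorem \ref{brgp} (Section 2.3) shows $c\beta\,\neq\,0$ for $c\,\in\,[1,h-1]$ whenever $h\,>\,1$; here $h\,=\,2$, so $\beta\,\neq\,0$ and \emph{no} rational section can exist. Your proposed construction cannot rescue this: a general stable bundle $E$ of rank $2$ and degree $0$ on a genus-$2$ curve does not have a single distinguished maximal sub-line bundle --- the Lange--Narasimhan/Segre analysis produces a finite set of maximal subbundles permuted by monodromy, not a canonical one --- and indeed any rationally defined choice of a line in $E_y$ would contradict $\beta\,\neq\,0$. So the descent from $\mathcal H$ to $U^{'s}_{{\mathcal O}_Y}(2,0)$ fails, and rationality of a variety is in general not inherited by the base of a nontrivial Severi--Brauer fibration, so nothing can be concluded this way.

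The paper avoids all of this in the even case by quoting an explicit description: for $g_Y=2=n$ and $d$ even, $U^{'s}_{\mathbb L}(2,d)$ is an open subset of ${\mathbb P}^3$ (the nodal analogue of the Narasimhan--Ramanan description, \cite{B2}), hence rational. If you want a self-contained argument you need some such global model; a Hecke reduction to the coprime case cannot work here because it runs directly into the nontrivial Brauer class. (Your preliminary reduction of the determinant to ${\mathcal O}_Y$ via divisibility of $\mathrm{Pic}^0(Y)$ is fine, and the rationality of $\mathcal H$ itself is correct modulo the routine check that Hecke modification preserves stability generically.)
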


\begin{proof} 
If $g_Y\,=\,2\,=\, n$, and $d$ is even, then $U^{'s}_{\mathbb L}(n,d)$ 
is an open subset of ${\mathbb P}^3$ \cite{B2}. Hence it is rational. 

If $g_Y\,=\,2\,=\, n$, and $d$ is odd, then
$U^{'s}_{\mathbb L}(n,d)$ is rational. Indeed, this is a 
special case of Theorem \ref{birational}(2) which will be
proved later.
\end{proof}

\subsection{The case of higher genera}

Assume that $g_Y\,\ge\, 2$ and $n\,\ge\, 2$.
If $Y$ is nonsingular,
Theorem \ref{birational}(1) is due to
King and Schofield \cite{KS}. Their proof essentially 
goes through in the nodal case.
To avoid extensive repetition, we mainly explain 
the modifications needed in the proof for the nodal case.

We recall the definitions and constructions used
in \cite{KS}. 
For a torsionfree sheaf $E$ of rank $r(E)$, degree $d(E)$, we call 
$(r(E),d(E))$ the type of $E$. Let $\mu(E)$ denote the slope of $E$. 

For torsionfree sheaves $E, F$ with
at least one of $E$ and $F$ locally free, we can define 
$$
\chi(F,E)\,=\, \sum_i (-1)^i \ \dim {\rm Ext}^i(F,E)\, .
$$
Note that the sum on the right hand side is infinite if both $E$ 
and $F$ are not locally free \cite{B5}. If 
at least one of $E$ and $F$ is locally free, then 
${\rm Ext}^i(F,E)= H^i(F^*\otimes E)$, and the terms in
the right side vanish except for $i\,=\, 0\, ,1$.
Then we have
\begin{equation}\label{be1}
\chi((r(F),d(F)), (r(E),d(E)))\, :=\,
\chi(F,E)= r(F)d(E)-r(E)d(F) -r(E)r(F)(g_Y-1)\, .
\end{equation}
Note that $\chi(F,E)$ is bilinear for exact sequences.

The following lemma is similar to Lemma 2.1 of \cite{KS}.

\begin{lemma}\label{l3.1}
Let $E$ and $F$ be general vector bundles of fixed types.
Suppose that
$${\rm Hom}\, (F,E) \,\neq\, 0\, ;$$
let 
$$\phi: F \longrightarrow E$$ be a general element. Then 
${\rm Ext}^1(F,E)=0$ and $\phi$ has maximal rank. 
If $r(E)\neq r(F)$, then ${\rm coker}(\phi)$ is torsionfree; 
in particular, $\phi$ is surjective (respectively, injective) 
if $r(E)\,<\,r(F)$ (respectively, $r(E)\,>\,r(F)$). 
\end{lemma}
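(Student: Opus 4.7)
The plan is to adapt the proof of \cite[Lemma 2.1]{KS} to the nodal setting. The argument splits into three parts corresponding to the three conclusions: the vanishing of $\mathrm{Ext}^1(F,E)$, the maximal rank of a general $\phi$, and the torsionfreeness of $\mathrm{coker}(\phi)$ when $r(E)\neq r(F)$.

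\emph{Step 1 (Ext-vanishing).} As $(E,F)$ varies over the product of the two moduli spaces of stable bundles of the given types, the dimensions of $\mathrm{Hom}(F,E)$ and $\mathrm{Ext}^1(F,E)$ are upper semicontinuous, and their difference $\chi(F,E)$ depends only on the types by \eqref{be1}. Hence the open loci on which each attains its generic value coincide, and it suffices to produce a single pair $(E_0,F_0)$ with $\mathrm{Hom}(F_0,E_0)\neq 0$ and $\mathrm{Ext}^1(F_0,E_0)=0$. I would construct such a pair by specializing $E$ and $F$ to successive extensions of line bundles on $Y$, arranged so that each line bundle subquotient of $F_0^\vee\otimes E_0$ has degree $>2g_Y-2$. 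The associated long exact sequences then reduce the Ext-vanishing to the vanishing of $H^1$ of line bundles of sufficiently high degree on the Gorenstein curve $Y$, which follows from Serre duality.

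\emph{Step 2 (maximal rank and torsionfree cokernel).} Once Ext-vanishing holds generically, $\dim\mathrm{Hom}(F,E)$ is constant on an open dense subset of pairs. The locus in $\mathbb{P}(\mathrm{Hom}(F,E))$ of maps of maximal rank is Zariski open; to see it is nonempty, one exhibits a maximal-rank $\phi_0$ for the pair $(E_0,F_0)$ of Step~1 via block patterns coming from the line-bundle filtrations, and then deforms. Assume now that $r(E)\neq r(F)$ and, by symmetry, that $r(F)<r(E)$. A generic $\phi$ is then injective, and $\mathrm{coker}(\phi)$ has torsion precisely at points of $Y$ where $\phi$ drops rank. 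Since $E$ and $F$ are locally free, their stalks at every $y\in Y$---whether smooth or nodal---are free $\mathcal{O}_{Y,y}$-modules, so the set of local maps of maximal rank is open and dense in the local Hom. A codimension count on $\mathbb{P}(\mathrm{Hom}(F,E))$ then shows that the locus of $\phi$ which drop rank somewhere on $Y$ is a proper closed subvariety.

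\emph{Main obstacle.} The key difficulty is Step~1: producing an explicit pair $(E_0,F_0)$ of the prescribed types for which Ext-vanishing holds. In the smooth case of \cite{KS} this is done by degeneration to direct sums of line bundles. In the nodal case one must additionally verify that the chosen degenerate pair lies in the closure of the stable locus in $U_Y(r,d)$; this uses the irreducibility of the moduli of stable bundles on $Y$ and the existence of stable bundles in every type, facts recalled in the previous section. Once these are in place, the argument runs in parallel to \cite{KS}.
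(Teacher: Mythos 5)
Your proposal diverges substantially from the paper's argument, and it has genuine gaps. The paper (following \cite[Lemma 2.1]{KS} verbatim) runs a single global dimension count on the space of \emph{triples} $(F,E,[\phi])$: such a triple depends on $p_0=1-\chi(F,F)-\chi(E,E)+h^0(F^*\otimes E)$ parameters, and the triple is reconstructed from the subquotients $K=\ker\phi$, $I'$, $Q'=Q/T(Q)$, $T$ together with extension data; comparing parameter counts forces $h^1(F^*\otimes E)=0$, $T=0$ and maximal rank simultaneously. The one genuinely new input needed on a nodal curve is \cite[Proposition 2.8(2)]{B5}, which guarantees that $K$, $I$, $I'$, $Q'$ arising from a general map between general bundles are again \emph{general} vector bundles of their types, so that their parameter counts are $1-\chi$. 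Your proposal uses none of this.

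Two concrete gaps. First, your Step 1 cannot prove the $\mathrm{Ext}^1$-vanishing in the stated generality: part of the content of the lemma is that for general $E,F$ one cannot have $\mathrm{Hom}(F,E)\neq 0$ and $\mathrm{Ext}^1(F,E)\neq 0$ simultaneously even when $\chi(F,E)\le 0$ (in which case the conclusion is really that $\mathrm{Hom}(F,E)=0$ generically). When $\chi(F,E)<0$ there is \emph{no} pair with $h^1(F^*\otimes E)=0$, so no degeneration to line bundles of high degree can exist, and semicontinuity gives you nothing; only the global count over triples handles this case. Second, your torsionfreeness argument fixes the pair $(E,F)$ and appeals to "a codimension count on $\mathbb{P}(\mathrm{Hom}(F,E))$"; but for a fixed pair the linear system $\mathrm{Hom}(F,E)$ need not be generic enough for a Bertini-type degeneracy-locus estimate (all maps could drop rank at a common point), which is precisely why King--Schofield, and the paper, let $(E,F)$ vary and count parameters of triples whose cokernel has torsion. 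Your deformation from the decomposable pair $(E_0,F_0)$ to a general stable pair also presupposes an irreducible family containing both together with openness of the relevant conditions over it, which you assert but do not supply. If you want a complete proof along the paper's lines, the step you must isolate and prove is exactly \cite[Proposition 2.8(2)]{B5}; with it, the KS count goes through unchanged.
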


\begin{proof}
A general bundle $E$ of type $(n,d)$ depends on 
$n^2(g-1)+1= 1-\chi(E,E)$ parameters, and similarly,
a general $F$ depends on $1-\chi(F,F)$ parameters. 
The homothety class $[\phi]\in {\mathbb P}( \ {\rm Hom} \ (F,E))$ 
of $\phi$ depends on $h^0(F^*\otimes E)-1$ parameters. 
Thus the triple $(F,E,[\phi])$ depends on 
\begin{equation}\label{ba2}
p_0\, :=\, 1- \chi(F,F)-\chi(E,E)+h^0(F^*\otimes E)
\end{equation}
parameters.

Define $K:= {\rm ker} \ \phi$, $I:= {\rm im} \ \phi$
and $Q:= {\rm coker} \ \phi$. Let 
$T:= T(Q)$ be the torsion subsheaf of $Q$. Define $Q':= Q/T(Q)$ 
and
$$I':= {\rm ker} (E \longrightarrow Q')\, .
$$
Since $E, F$ are general bundles, 
by \cite[Proposition 2.8(2)]{B5}, the torsionfree sheaves 
$K$, $I$, $I'$ and $Q'$ are all general vector bundles. 
The proof now proceeds exactly as for \cite[Lemma 2.1]{KS}.
\end{proof}

For a given type $(n,d)$, let $(s,e)$ be the unique type
satisfying the following two conditions:
\begin{itemize}
\item $\chi((s,e), (n,d)) = h:= \text{g.c.d.}(n,d)$, and 
\item if $h<r$, then $r/h <s<2r/h$, and if $h=r$, then $s=2$.
\end{itemize}

\begin{proposition}\label{p3.2}
There exists a vector bundle $F$ of type $(s,e)$ such that 
for a general vector bundle $E$ of type $(n,d)$ the following
hold:
\begin{enumerate}
\item $h^1(F^*\otimes E)=0$, $h^0(F^*\otimes E)=h$,
\item the natural map $\epsilon_F(E): \ {\rm Hom}(F,E)
\otimes_k F \longrightarrow E$ is surjective, and
\item $E_1\,=\, {\rm kernel}(\epsilon_F(E))$ is a general vector bundle 
with $h^1(E_1^*\otimes F)=0$.
\end{enumerate}
\end{proposition}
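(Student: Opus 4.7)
The plan is to adapt the proof of \cite[Proposition 2.2]{KS} to the nodal setting, substituting Lemma \ref{l3.1} for \cite[Lemma 2.1]{KS} and invoking the results on general torsionfree sheaves from \cite{B5} wherever King--Schofield use genericity on a smooth curve. First, I would record the numerical input: by \eqref{be1}, the defining condition $\chi((s,e),(n,d)) = h$ is purely numerical, so $\chi(F,E) = h$ for \emph{any} pair $F,E$ of the prescribed types. Hence item (1) reduces to the vanishing $h^1(F^\ast \otimes E) = 0$, after which $h^0 = \chi = h$ is automatic.

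To produce such an $F$, I would argue by semicontinuity: it suffices to exhibit a single pair $(F,E)$ with $h^1(F^\ast \otimes E) = 0$, and then generic $F$ and generic $E$ will work. The standard way to manufacture such a pair is to build $F$ as an iterated extension of general line bundles of suitable degrees on $Y$ and likewise for $E$, then apply the vanishing theorems for general line bundles on the nodal curve (the arithmetic genus $g_Y$ replaces $g$ in the smooth case, but the relevant $H^1$ vanishing for line bundles of sufficiently positive degree still holds on $Y$). The numerical constraints on $(s,e)$ (in particular $\chi = h > 0$) should make this extension argument go through exactly as in \cite{KS}.

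For item (2), the space $\mathrm{Hom}(F,E)$ has dimension $h$, and the evaluation map $\epsilon_F(E)\colon F^{\oplus h} \to E$ is precisely the ``general'' homomorphism from a bundle of rank $hs$ to a bundle of rank $n$. The condition $s > n/h$ in the definition of $(s,e)$ (or $s=2, h=r$, which gives $hs = 2r > r = n$) guarantees $r(F^{\oplus h}) > r(E)$, so Lemma \ref{l3.1} applies and forces $\epsilon_F(E)$ to be surjective with torsionfree kernel. This is the place where Lemma \ref{l3.1} is essential: the nodal analog of King--Schofield's rank-maximality lemma is what makes the cokernel argument work, and \cite[Proposition 2.8(2)]{B5} is what guarantees that the kernel is a general \emph{vector bundle}, not merely a torsionfree sheaf.

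For item (3), set $E_1 := \ker \epsilon_F(E)$, a vector bundle of type $(hs - n,\, he - d)$ by the short exact sequence
\begin{equation*}
0 \longrightarrow E_1 \longrightarrow F^{\oplus h} \longrightarrow E \longrightarrow 0.
\end{equation*}
Applying $\mathrm{Hom}(-,F)$ to this sequence and using $\mathrm{Ext}^2 = 0$ on the curve $Y$ (since $F$ is locally free), the vanishing $h^1(E_1^\ast \otimes F) = 0$ will follow from the vanishing of $\mathrm{Ext}^1(F^{\oplus h}, F) = h \cdot \mathrm{Ext}^1(F,F)$ for general $F$, again reducing to a generic cohomology statement on $Y$. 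Genericity of $E_1$ is then a parameter-count argument of the same flavor as \eqref{ba2}: the triple $(F, E, \epsilon_F(E))$ depends on enough parameters that the induced rational map to the moduli of bundles of type $(hs-n, he-d)$ is dominant onto the locus of general vector bundles. The main obstacle is verifying that this parameter count and the genericity conclusions from \cite{B5} actually give a general vector bundle at the nodes rather than a torsionfree sheaf with non-locally-free points; this is exactly the content carried by Lemma \ref{l3.1} together with \cite[Proposition 2.8(2)]{B5}, and once those two ingredients are in place the remainder of the King--Schofield argument transcribes verbatim.
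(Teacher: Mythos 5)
Your proposal follows exactly the paper's route: the paper's entire proof of this proposition is the one-line observation that the argument of \cite[Proposition 2.3]{KS} goes through once Lemma \ref{l3.1} is substituted for \cite[Lemma 2.1]{KS}, which is precisely your plan (together with the appeal to \cite[Proposition 2.8(2)]{B5} for genericity of the kernel, already built into Lemma \ref{l3.1}). One caveat on your expansion of item (2): the evaluation map $\epsilon_F(E)\colon F^{\oplus h}\to E$ is \emph{not} a general homomorphism between general bundles of types $(hs,he)$ and $(n,d)$ --- the source $F^{\oplus h}$ is decomposable, hence special, so Lemma \ref{l3.1} cannot be invoked on it directly, and surjectivity of the evaluation map must instead be obtained by the dimension-count/filtration argument of \cite{KS}, as your closing ``transcribes verbatim'' fallback correctly anticipates.
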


\begin{proof}
The proof of \cite[Proposition 2.3]{KS} goes through 
using Lemma \ref{l3.1}.
\end{proof}

All the three conditions in Proposition \ref{p3.2} are Zariski
open. Hence the general vector bundle $F$ satisfies the conditions
in Proposition \ref{p3.2}.

We now use Proposition \ref{p3.2} to define the Brauer class of 
(the function field of) the moduli space $U_Y(n,d)$. 
Note that $U_Y(n,d)$ is irreducible \cite{R}.

We recall the definition of Brauer class of a variety \cite[Definition 
3.1]{KS}.

\begin{definition}\label{bclass}
{\rm Let $Z$ be an affine algebraic variety on which the 
algebraic group $\text{PGL}_n(k)$
is acting freely. Over the quotient variety 
$Z/{\rm PGL}_n(k)$, there is a bundle of central simple algebras 
$M_n(k)\times^{\text{PGL}_n(k)} Z$ of dimension $n^2$. 
At the generic point of $Z/\text{PGL}_n(k)$, this is a central simple algebra 
over the function field $k(Z/\text{PGL}_n(k))$. This algebra defines a class 
in the Brauer group of $k(Z/\text{PGL}_n(k))$ which 
will be denoted by $\beta(Z/\text{PGL}_n(k))$.} 
\end{definition}
 
\subsection{The Brauer class of the moduli space $U_Y(n,d)$}\label{X(n,d)}

Fix a general vector bundle $F$ as in Proposition \ref{p3.2}. 
Let $X_{n,d}$ be the open subset of Quot $(F^h,n,d)$ 
parametrizing the quotients $q: F^h \longrightarrow E$ such that
\begin{itemize}
\item $E$ 
is a stable vector bundle of rank $n$ and degree $d$, and 

\item the induced map $k^{\oplus h}\, \longrightarrow\,
{\rm Hom} (F,E)$ is an isomorphism. 
\end{itemize}
The group $\text{PGL}_h(k)= P(\text{Aut}( F^{\oplus h}))$ acts on $X_{n,d}$ 
and the quotient $X_{n,d}/\!/\text{PGL}_h(k)$ is isomorphic to 
an open dense subset of $U_Y(n,d)$. Since $U_Y(n,d)$ is a projective
variety, taking the inverse image of an affine open dense subset of
$U_Y(n,d)$ contained in the image of $X_{n,d}$, we may replace $X_{n,d}$
by an affine open dense $\text{PGL}_h(k)$--equivariant subset of it.
Thus we can identify the function field of
$U_Y(n,d)$ with the function field $k(X_{n,d}/\!/\text{PGL}_h(k))$. 

\begin{definition}[\cite{KS}, Definition 3.3]\label{d14}

{\rm We define the} Brauer class 
$$\psi_{n,d}\, \in\, Br(k(U_Y(n,d))$$
{\rm as the Brauer class $\beta(X_{n,d}/\!/\text{PGL}_h(k))$ (see 
Definition \ref{bclass}).}
\end{definition}

\begin{theorem}\label{birational2}
Let $Y$ be an irreducible nodal curve with $g_Y\ge 2$.
\begin{enumerate}
	\item There exists a birationally linear map 
$$\mu: U_Y(n,d) \,\dasharrow\, U_Y(h,0) \ {\rm such \ that} \ 
\mu^*(\psi_{h,0})= \psi_{n,d}. $$
 Thus $U_Y(n,d)$ is birational to 
 $U_Y(h,0) \times {\mathbb A}^{(n^2-h^2)(g-1)}$. 
 \item If $n$ and $d$ are coprime, then $U^{'s}_{\mathbb L}(n,d)$ 
 is rational for any line bundle ${\mathbb L}$.
\end{enumerate}
\end{theorem}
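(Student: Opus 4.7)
The plan is to adapt the proof of the main theorem of King and Schofield \cite{KS} to the nodal setting; the preparatory Lemma~\ref{l3.1} and Proposition~\ref{p3.2}, together with the Ext-vanishing for general torsionfree sheaves from \cite[Proposition~2.8(2)]{B5}, have been set up precisely as replacements for the corresponding ingredients of loc.\ cit. The crux is a one-step reduction: for a general stable vector bundle $E$ of type $(n,d)$, Proposition~\ref{p3.2} provides the canonical short exact sequence
$$0\,\longrightarrow\, E_1\,\longrightarrow\, F^{\oplus h}\,\longrightarrow\, E\,\longrightarrow\, 0$$
with $E_1$ a general vector bundle of type $(n',d')\,:=\,(sh-n,\,eh-d)$. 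The association $E\,\mapsto\, E_1$ defines a rational map $\mu_1:U_Y(n,d)\,\dasharrow\, U_Y(n',d')$. The generic fibre is birational to an affine space: over a general $E_1$, the fibre is parametrised by the space of injections $E_1\,\hookrightarrow\, F^{\oplus h}$ with locally free cokernel modulo automorphisms of $F^{\oplus h}$, and a dimension count via \eqref{be1} identifies it with an open subset of ${\mathbb A}^{(n^2-(n')^2)(g_Y-1)}$.

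Next, I would track the Brauer class. By Definition~\ref{d14}, $\psi_{n,d}$ is the $\mathrm{PGL}_h$-class of the Quot-scheme $X_{n,d}$. Reversing the roles of $E$ and $E_1$ yields a $\mathrm{PGL}_h$-equivariant birational correspondence between $X_{n,d}$ and an analogous variety $X_{n',d'}$, from which one reads $\mu_1^{\ast}(\psi_{n',d'})=\psi_{n,d}$ exactly as in \cite[Theorem~3.4]{KS}. An arithmetic check based on \eqref{be1} shows that $\gcd(n',d')=h$ again and that once the normalisation $r/h<s<2r/h$ on $(s,e)$ is imposed the rank strictly decreases (or else $n=h$ already). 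Hence iterating $\mu_1$ terminates at type $(h,0)$ in finitely many steps. Composing the resulting birationally linear maps gives $\mu:U_Y(n,d)\,\dasharrow\, U_Y(h,0)$ with $\mu^{\ast}(\psi_{h,0})=\psi_{n,d}$, and the affine factor of dimension $(n^2-h^2)(g_Y-1)$ falls out by telescoping, using $\dim U_Y(n,d)=n^2(g_Y-1)+1$ for the stable locus.

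For part~(2), assume $\gcd(n,d)=1$, so $h=1$. Taking determinants in the defining exact sequence gives $\det E_1=\det F^{\otimes h}\otimes(\det E)^{-1}$, so at each iteration the kernel construction is compatible with the determinant morphisms $U_Y(\cdot,\cdot)\,\longrightarrow\, J_Y$. Consequently $\mu$ intertwines the two determinant fibrations and sends $U'_{\mathbb L}(n,d)$ into the single point $U'_{\mathbb M}(1,0)=\{\mathbb M\}$ for an explicit line bundle $\mathbb M$ read off from the sequences. Since $\mu$ is birationally linear, its generic fibre is birational to ${\mathbb A}^{(n^2-1)(g_Y-1)}$; and since the generalised Jacobian $J_Y$ acts on $U'_Y(n,d)$ via tensor product, covering the multiplication-by-$n$ surjection on the base, every fibre of the determinant is isomorphic to the generic one. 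Therefore $U'_{\mathbb L}(n,d)$ is birational to ${\mathbb A}^{(n^2-1)(g_Y-1)}$, and so is rational.

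The main obstacle is bookkeeping: at every step of the induction one must stay inside the stable locally free locus (so as to have Proposition~\ref{p3.2} and Riemann--Roch available) and one must verify that the ``generality'' hypotheses on $E$ and $E_1$ are preserved after each reduction. These points are exactly what Lemma~\ref{l3.1}, Proposition~\ref{p3.2}, and the codimension estimate from \cite{B3} used already in Section~2 are designed to handle; once they are in place, the Brauer-class computation and the iteration proceed in parallel with \cite{KS}, so no genuinely new idea is required for the remainder.
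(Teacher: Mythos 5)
Your proposal is correct and follows essentially the same route as the paper: the paper's own proof consists of citing \cite[Theorem~5.1]{KS} and asserting that the argument carries over to nodal $Y$ verbatim once the preparatory results (Lemma~\ref{l3.1}, Proposition~\ref{p3.2}, and \cite[Proposition~2.8(2)]{B5}) are in place, which is exactly the adaptation you carry out. Your sketch merely makes explicit the King--Schofield kernel reduction, the Brauer-class bookkeeping, and the determinant-fibration argument for the coprime case that the paper leaves implicit in the citation.
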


\begin{proof}
In \cite[Theorem 5.1]{KS} this was proved for nonsingular $Y$.
The proof of \cite{KS} goes through for nodal $Y$ without
any change. In the nodal case, we consider only the parabolic 
moduli spaces with parabolic structure at smooth points (see
\cite[section 4]{KS}).
\end{proof}

This completes the proof of Theorem \ref{birational}.

\section{Moduli spaces of sheaves over rational nodal curves}

Throughout this section, $Y$ denotes a rational nodal curve 
(unless otherwise stated).
 
We first consider the case where $h=1$.

\begin{theorem}\label{rational1}
Let $Y$ be a rational nodal curve of genus $g_Y\ge 1$.
\begin{enumerate}
\item The compactified Jacobian ${\overline J}_Y$ 
of $Y$ is rational. 
\item If $n\ge 2$, and $n$ is coprime to $d$, then 
$U_Y(n,d)$ is rational.
\end{enumerate} 
\end{theorem}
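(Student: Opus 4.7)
The plan is to handle the two parts in sequence, with part (2) reducing to part (1) via Theorem \ref{birational}(1).

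For part (1), I would exploit the classical extension describing the generalized Jacobian of a nodal curve. Let $\nu\colon X\to Y$ be the normalization, with $Y$ having $\ell=g_Y$ nodes, so that $X\cong{\mathbb P}^1$ and $J_X=\mathrm{Pic}^0(X)$ is trivial. The standard exact sequence of Picard schemes
$$
1\,\longrightarrow\,\prod_{j=1}^{\ell}{\mathbb G}_m\,\longrightarrow\,\mathrm{Pic}^0(Y)\,\longrightarrow\,\mathrm{Pic}^0(X)\,\longrightarrow\, 1
$$
therefore collapses to $\mathrm{Pic}^0(Y)\cong({\mathbb G}_m)^{\ell}$, an affine open subset of ${\mathbb A}^{\ell}$, which is in particular rational. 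Since $Y$ is irreducible, the compactified Jacobian $\overline{J}_Y$ is irreducible (this is the same as $U_Y(1,0)$, and irreducibility of moduli of torsionfree sheaves is \cite{R}) and it contains $\mathrm{Pic}^0(Y)$ as a Zariski dense open subset. Hence $\overline{J}_Y$ is birational to $\mathrm{Pic}^0(Y)$ and therefore rational.

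For part (2), put $h:=\gcd(n,d)=1$ and split on the arithmetic genus. If $g_Y=1$, then Lemma \ref{l-3.1} (or rather its proof) identifies $U_Y(n,d)$ with $S^{1}(Y)=Y$, which is rational. If $g_Y\ge 2$, apply Theorem \ref{birational}(1) with $h=1$ to obtain a birational equivalence
$$
U_Y(n,d)\,\sim\, U_Y(1,0)\times{\mathbb A}^{(n^{2}-1)(g_Y-1)}\,=\,\overline{J}_Y\times{\mathbb A}^{(n^{2}-1)(g_Y-1)}.
$$
By part (1) the first factor is rational, so $U_Y(n,d)$ is rational.

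The argument is essentially an assembly of earlier results: the Picard-scheme description of nodal curves for (1), and the birational linearization of Theorem \ref{birational}(1) for (2). There is no serious analytic or geometric obstacle, but one has to be mindful that Theorem \ref{birational} carries the standing hypothesis $g_Y\ge 2$; the genus one case of (2) must therefore be dispatched separately via Lemma \ref{l-3.1}, and one must verify that the identification $U_Y(1,0)=\overline{J}_Y$ is literally the definition of the degree zero compactified Jacobian as the moduli of rank one torsionfree sheaves of degree zero (all of which are automatically semistable). Once these bookkeeping points are in place, rationality follows formally.
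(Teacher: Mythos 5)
Your argument is correct, and it reaches the same conclusion by a mildly different route in part (1). Where the paper quotes Igusa's result that the compactified Jacobian of a rational nodal curve admits the desingularization $\widetilde{J}_Y\,=\,({\mathbb P}^1)^{m}$ (a resolution ``from above''), you instead exhibit a rational variety ``from below'': the generalized Jacobian $\mathrm{Pic}^0(Y)\,\cong\,({\mathbb G}_m)^{g_Y}$, obtained from the standard exact sequence for the Picard scheme of a nodal curve, sitting as a nonempty open subset of the irreducible variety $\overline{J}_Y$ (irreducibility being Rego's theorem, openness of the locally free locus being standard). Both give a birational identification of $\overline{J}_Y$ with a rational variety; yours avoids the reference to Igusa at the cost of invoking irreducibility of $\overline{J}_Y$ and density of the line-bundle locus, both of which are legitimate standard facts for an irreducible nodal curve. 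In part (2) your proof coincides with the paper's reduction via Theorem \ref{birational}(1) to $\overline{J}_Y\times{\mathbb A}^{(n^2-1)(g_Y-1)}$; you are in fact slightly more careful than the printed proof, since Theorem \ref{birational} carries the hypothesis $g_Y\ge 2$ and the genus-one case must be (and is, in your write-up) dispatched separately through the identification $U_Y(n,d)\cong S^1(Y)=Y$ of Lemma \ref{l-3.1}. No gaps.
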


\begin{proof}
A natural desingularization of the Jacobian of $Y$ is 
the $m$-fold product ($m$ being the number of nodes)
$$
{\widetilde J}_Y\,=\, {\mathbb P}^1 \times \cdots \times {\mathbb P}^1
$$
(see \cite{I}). Thus $ {\widetilde J}_Y$ and $\overline{J}_Y$ 
are rational.

If $n\geq 2$ and $d$ are coprime,
by Theorem \ref{birational2},
the moduli space $U_Y(n,d)$ is birational to 
 ${\overline J}_Y \times {\mathbb A}^{(n^2-1)(g-1)}$. 
The latter is rational by part (1). 
\end{proof}

Parts (1), (2) and (4) of Theorem \ref{rational21} now follow from Theorem 
\ref{rational1} and Lemma \ref{l-3.1}. Part (3) will be proved in the last section.

\noindent
\textbf{Assumptions:}
\begin{enumerate}
\item In view of Theorem \ref{rational1}, henceforth we assume that 
$h\ge 2$. 

\item In view of Lemma \ref{l-3.1}, we may, and we will,
assume that $g_Y\,\ge \, 2$.
\end{enumerate}

\begin{remark}\label{reduction}
{\rm By Theorem \ref{birational}, the moduli
space $U_Y(n,d)$ is birational to the Cartesian product
$U_Y(h,0) \times {\mathbb A}^{(n^2-h^2)(g_Y-1)}$. 
Hence to show that $U_Y(n,d)$ is rational (respectively 
stably rational), it is enough to prove the same for $U_Y(h,0)$.}
\end{remark}

We shall study rationality and stable rationality of $U_Y(h,0)$. 
Using generalized parabolic sheaves,
 we reduce the problems to problems on quotients 
of the products of $M_h$ (the ring of $h\times h$ matrices) by 
the conjugation action of $\text{PGL}_h$ (see Question \ref{c4.3}).
These problems are studied by several algebraists and algebraic 
geometers. Answers to them are known in several cases, 
most problems are still unsolved. 

\subsection{Generalized parabolic sheaves}\label{gps}

Let $X$ be a curve with at most ordinary nodes as singularities. 
Fix a positive integer $m$, and also fix divisors on $X$
$$D^j\,=\, x_j+z_j\, ,$$ $j\,\in\, [1\, , m]$, where 
$(x_j,z_j)$ are pairs of distinct smooth points 
of $X$. Define $D_m\,:=\, \{D^j\}_{j=1}^m$. Denote by 
$$M_{X,D_m}=M_{X,D_m}(n,d)$$ 
 the moduli space of stable generalized parabolic 
sheaves (``GPS'' for short) on $X$ with generalized 
parabolic structure over the divisors 
$D^j$, $j\,=\,1,\cdots ,m$ (see \cite{B1} for generalities on GPS). 
Let $X_m$ be the nodal curve obtained
by identifying the point $x_j$ with $z_j$ for each $j$.
For $j\,\in\, [1\, , m]$, let $y_j$ be the image of
$x_j$ (or $z_j$) in $X_m$, which is a singular point
of $X_m$. There is a birational surjective morphism 
\begin{equation}\label{Phi}
\Phi\,:\, M_{X,D_m}\, \longrightarrow\, U_{X_m}(n,d)
\end{equation}
\cite[Theorem 2]{B1}.

Let $X$ be a rational nodal curve and let 
$$p\,: \,{\mathbb P}^1 \,\longrightarrow\, X$$
be the normalization map. 

\begin{lemma}\label{l4.1}
Let $F$ be a stable vector bundle of rank $n$, degree $0$ 
over a rational curve $X$ of genus $g_X$ 
and define $E\,:= \,p^*F$. Then 
$E= \bigoplus _{i=1}^n {\mathcal O}_{{\mathbb P}^1}(a_i)$, 
where $-g_X+1\, \leq\, a_i\, \leq\, g_X-1$ with $\sum a_i\,=\,0$. 
\end{lemma}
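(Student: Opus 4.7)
The splitting $E \,=\, \bigoplus_{i=1}^n \mathcal{O}_{\mathbb{P}^1}(a_i)$ is just Grothendieck's theorem on vector bundles over $\mathbb{P}^1$. For the identity $\sum_i a_i \,=\, 0$, the plan is to tensor the normalization sequence
$$
0\,\longrightarrow\,\mathcal{O}_X\,\longrightarrow\, p_*\mathcal{O}_{\mathbb{P}^1}\,\longrightarrow\,\bigoplus_{j=1}^{g_X} k_{y_j}\,\longrightarrow\, 0
$$
with $F$ and compare Euler characteristics; this gives $\chi(p^*F)\, =\, \chi(F)+n g_X$ and hence $\deg p^*F\, =\, \deg F\, =\, 0$.

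Since $F^*$ is also stable of rank $n$ and degree $0$, by replacing $F$ with $F^*$ it is enough to prove the upper bound $a_1 \,:=\, \max_i a_i \,\le\, g_X-1$. The idea is to exploit the gluing description of $F$: giving $F$ is equivalent to giving $E$ together with linear isomorphisms $\phi_j\colon E|_{x_j}\, \to\, E|_{z_j}$ at the pairs above the nodes $y_j$, and $F \hookrightarrow p_*E$ is the subsheaf of sections continuous under the $\phi_j$. Put $E_{\max} \,:=\, \bigoplus_{a_i=a_1}\mathcal{O}_{\mathbb{P}^1}(a_i) \subset E$, of some rank $r$, and consider the subsheaf
$$
G\,:=\,F\cap p_*E_{\max}\,\subset\, F
$$
(intersection inside $p_*E$). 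Since $p$ is an isomorphism away from the nodes, $G$ has generic rank $r$; in the nontrivial case $r<n$, stability of $F$ forces $\mu(G)<\mu(F)=0$, i.e., $\deg G<0$.

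The degree of $G$ is then computed from
$$
\deg G\, =\, \deg F+\deg p_*E_{\max}-\deg(F+p_*E_{\max}).
$$
Using $\deg p_*\mathcal{O}_{\mathbb{P}^1}(a)\, =\, a+g_X$ (apply $\chi$ to a rank-one torsion-free sheaf on $X$), one gets $\deg p_*E\, =\, n g_X$ and $\deg p_*E_{\max}\, =\, r(a_1+g_X)$. The quotient $p_*E/(F+p_*E_{\max})$ is a torsion sheaf supported at the nodes, and at $y_j$ its stalk is identified with $E|_{z_j}/\bigl(\phi_j(E_{\max}|_{x_j})+E_{\max}|_{z_j}\bigr)$, of dimension $n-2r+k_j$ where $k_j\,:=\,\dim\bigl(\phi_j(E_{\max}|_{x_j})\cap E_{\max}|_{z_j}\bigr)$. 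Putting everything together yields $\deg G\, =\, r a_1 - r g_X + \sum_j k_j$, and the stability inequality $\deg G<0$ rearranges to
$$
a_1 \,<\, g_X - \frac{1}{r}\sum_j k_j \,\le\, g_X,
$$
hence $a_1 \le g_X-1$ by integrality. The degenerate case $r=n$ (all $a_i$ equal) is immediate, since $\sum_i a_i = 0$ then forces $a_i = 0$.

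The main technical hurdle is the node-by-node identification of the image of $(p_*E_{\max})_{y_j}$ inside the quotient $(p_*E/F)_{y_j}\cong E|_{z_j}$ as $\phi_j(E_{\max}|_{x_j})+E_{\max}|_{z_j}$; once that is set up, the rest reduces to a short arithmetic computation with the stability inequality.
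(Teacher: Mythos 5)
Your argument is correct and, once unwound, proves the same inequality as the paper; the differences are in packaging. The paper encodes $F$ as a generalized parabolic sheaf $(E,\{F_j(E)\})$ on ${\mathbb P}^1$, where $F_j(E)$ is the graph of the gluing isomorphism at the $j$-th node, and quotes the GPS stability criterion $\mu(E_1)+\frac{1}{r(E_1)}\sum_j f_j(E_1)<g_X$ for every subbundle $E_1\subset E$, with $f_j(E_1)=\dim\bigl(F_j(E)\cap(E_{1,x_j}\oplus E_{1,z_j})\bigr)$. Your $k_j$ is exactly $f_j(E_{\max})$ (projecting the graph onto the fibre at $z_j$ identifies $F_j(E)\cap(E_{\max,x_j}\oplus E_{\max,z_j})$ with $\phi_j(E_{\max}|_{x_j})\cap E_{\max}|_{z_j}$), and your inequality $a_1<g_X-\frac{1}{r}\sum_j k_j$ is precisely this criterion applied to $E_1=E_{\max}$ --- except that you re-derive it from scratch on $X$ itself, by computing $\deg\bigl(F\cap p_*E_{\max}\bigr)$ through the lengths of the torsion quotients of $p_*E$ at the nodes and then invoking ordinary stability of $F$; that computation checks out (in particular $n-2r+k_j\ge 0$ automatically, so the length count is legitimate). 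The paper instead obtains the upper bound by taking $E_1={\mathcal O}_{{\mathbb P}^1}(a_i)$ with the trivial bound $f_j\ge 0$, and the lower bound by taking the complementary summand $\bigoplus_{k\ne i}{\mathcal O}_{{\mathbb P}^1}(a_k)$ with the bound $f_j\ge 2r(E_1)-r(E)$; you get the lower bound by dualizing, which is also fine because $F^*$ is again stable of rank $n$ and degree $0$ ($F$ being locally free on an integral curve --- worth stating explicitly). Your route is more self-contained, avoiding the GPS stability criterion at the cost of the node-by-node stalk identification; both arguments are valid.
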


\begin{proof}
A vector bundle $F$ of rank $n$, degree $0$ 
on $X$ gives the vector bundle $E= p^*F$ of rank $n$, 
degree $0$ on ${\mathbb P}^1$ together with isomorphisms 
$g_j: E_{x_j} \,\stackrel{\sim}{\longrightarrow}\, E_{z_j}$, $j=1,\cdots, g_X$
(recall that $p(x_j)\,=\, p(z_j)$).
Let $F_j(E)$ denote the graph of $g_j$. Then $(E, \{F_j(E)\})$ is 
the generalized parabolic sheaf 
(GPS) on ${\mathbb P}^1$ corresponding to $F$. 
For any subbundle $E_1 \subset E$, and $1\le j\le g_X$, 
define
$$F_j(E_1)\,:=\, F_j(E)\cap (E_{1,x_j}
\oplus E_{1,z_j}) \subset E_{x_j}\oplus E_{z_j}; 
\ f_j(E_1)= \ {\rm dim} \ F_j(E_1)\, .$$ 
By dimension count, one has 
\begin{equation}\label{(0)}
f_j(E_1) \,\ge\, r(E)+ 2r(E_1)-2r(E)= 2r(E_1)-r(E)\, .
\end{equation} 
Clearly, $f_j(E_1)\,\ge\, 0$. 
 
Since $F$ is a stable vector bundle, we know
that $(E, \{F_j(E)\})$ is a stable GPS. 
Therefore, 
for any subbundle $E_1 \subset E$, we have 
$$\mu(E_1) + (\sum_j f_j(E_1))/r(E_1) \,<\, g_X\, .$$

Let $E\,=\, \bigoplus _{i=1}^n {\mathcal O}_{{\mathbb P}^1}(a_i)$. 
Take $E_1= {\mathcal O}_{{\mathbb P}^1}(a_i)$. 
Then the stability condition gives 
$$a_i+ \sum_j f_j({\mathcal O}_{{\mathbb P}^1}(a_i)) \,<\,g_X\, .$$ 
The inequality $f_j(E_1)\ge 0$ implies that $a_i <g_X$, so 
 one has $a_i\le g_X-1$ for all $i$. 
Taking $E_1= \bigoplus_{k\neq i} 
{\mathcal O}_{{\mathbb P}^1}(a_k)$, 
the stability condition gives 
$$
\frac{-a_i}{n-1} + \sum_{j} \frac{f_j(E_1)}{n-1} <g_X\, .$$
By inequality \eqref{(0)}, we have $f_j(E_1) \ge n-2$. Hence 
$$
\frac{-a_i}{n-1} + \frac{g_X(n-2)}{n-1} <g_X\, ,$$ 
hence $-a_i<g_X$ or $a_i \ge -g_X+1$. Thus for all $i$, 
one has $g_X-1 \ge a_i\ge -g_X+1$.
\end{proof} 

\begin{proposition}\label{p4.2}
For a general stable bundle $F$ of rank $n$ 
and degree zero on $X$, the
pullback $E= p^* F$ is the trivial vector bundle of rank $n$.
\end{proposition}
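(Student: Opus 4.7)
The plan is to use the generalized parabolic sheaf framework of Subsection \ref{gps}: by \cite[Theorem 2]{B1}, the moduli space $M := M_{\mathbb{P}^1, D_{g_X}}(n, 0)$ of stable GPS on $\mathbb{P}^1$, with parabolic structures at the $g_X$ pairs $D^j = x_j + z_j$ lying above the nodes of $X$, is birational to $U_X(n, 0)$. Under this birational map the underlying vector bundle of a GPS corresponds to the pullback $p^*F$. Hence it suffices to show that the locus $M_0 \subset M$ of stable GPS with trivial underlying bundle $E \cong \mathcal{O}_{\mathbb{P}^1}^n$ is dense in $M$.

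I would establish this in three steps. First, $M_0$ is Zariski open in $M$ by upper semi-continuity of the splitting type; among rank-$n$, degree-$0$ vector bundles on $\mathbb{P}^1$, triviality is the maximally balanced type, characterized for instance by the vanishing of $h^0(E \otimes \mathcal{O}_{\mathbb{P}^1}(-1))$. Second, fixing $E = \mathcal{O}_{\mathbb{P}^1}^n$, the GPS structures form the product $\prod_{j=1}^{g_X} Gr(n, E_{x_j} \oplus E_{z_j})$ of dimension $g_X n^2$, and isomorphism classes are orbits of the diagonal action of $\text{Aut}(E) = GL_n(k)$, which factors through $PGL_n$ (scalars preserve every subspace) and has trivial generic stabilizer for $g_X \geq 2$, giving
$$\dim M_0 \,=\, g_X n^2 - (n^2 - 1) \,=\, (g_X - 1) n^2 + 1 \,=\, \dim U_X(n, 0) \,=\, \dim M.$$
Third, once $M_0$ is shown non-empty, density follows from the irreducibility of $M$, which follows from that of $U_X(n, 0)$ by \cite{R}.

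For non-emptiness, I would take generic $g_1, \ldots, g_{g_X} \in GL_n(k)$ and form the GPS with $E = \mathcal{O}_{\mathbb{P}^1}^n$ and $F_j$ the graph of $g_j$. Applying the stability criterion recalled in the proof of Lemma \ref{l4.1}, any potentially destabilizing subbundle $E_1 \subset \mathcal{O}^n$ of slope $\geq 0$ must be a trivial subsheaf $\mathcal{O}^{r'} \subset \mathcal{O}^n$ determined by a proper subspace $V \subset k^n$, and the inequality $\mu(E_1) + \sum_j f_j(E_1)/r' < g_X$ fails precisely when $V$ is invariant under every $g_j$. Since $g_X \geq 2$ and a generic tuple $(g_j) \in GL_n(k)^{g_X}$ admits no proper common invariant subspace, the resulting GPS is stable. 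The main obstacle is this stability check, which reduces to the classical fact that a generic pair of $n \times n$ matrices has no proper common invariant subspace.
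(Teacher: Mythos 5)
Your proof is essentially correct, but it reaches the conclusion by a genuinely different route from the paper. The paper never exhibits an explicit stable GPS with trivial underlying bundle: instead it computes $\dim \mathrm{Hom}(E,E)$ for each possible splitting type $E=\bigoplus \mathcal{O}_{\mathbb P^1}(a_i)^{\oplus r_i}$ and shows that $\dim\mathrm{Hom}(E,E)>n^2$ whenever $E$ is nontrivial, so the stratum $\delta_E=n^2g_X-\dim\mathrm{Aut}(E)$ of $M_{{\mathbb P}^1,D_{g_X}}(n,0)$ attached to a nontrivial $E$ has dimension strictly less than $\dim M=n^2(g_X-1)+1$; since Lemma \ref{l4.1} bounds the $a_i$ and hence leaves only finitely many possible strata, the trivial stratum must contain a nonempty open set. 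You instead argue by (i) openness of the trivial locus via semicontinuity of $h^0(E(-1))$, (ii) the matching dimension count $g_Xn^2-(n^2-1)=\dim M$, and (iii) an explicit nonemptiness construction: for generic $(g_1,\dots,g_{g_X})\in \mathrm{GL}_n(k)^{g_X}$ the only potentially destabilizing subbundles of $\mathcal{O}^{\oplus n}$ are the trivial ones $V\otimes\mathcal{O}$, and these destabilize exactly when $V$ is a common invariant subspace, which a generic tuple (for $g_X\ge 2$) does not admit. Your route buys independence from Lemma \ref{l4.1} and from the $\mathrm{Hom}(E,E)$ computation, and it anticipates the identification of the trivial-bundle locus with $(\mathrm{GL}_n^{\times g_X})/\!\!/\mathrm{PGL}_n$ that the paper only makes later in Theorem \ref{t4.3}; the paper's route buys the stronger statement that \emph{every} nontrivial stratum is of strictly smaller dimension, without needing an explicit stability check. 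One point to tighten: your appeal to irreducibility of $M$ ``from that of $U_X(n,0)$ by \cite{R}'' is not quite a proof, since a surjective birational morphism onto an irreducible variety does not by itself force the source to be irreducible; you should instead invoke the irreducibility of the GPS moduli space directly (it is a good quotient of an irreducible parameter scheme in \cite{B1}), after which your openness-plus-nonemptiness argument does give density.
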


\begin{proof}
We may assume that the pullback $E$ of 
the vector bundle $F$ is of the form 
$$
E\,=\, \bigoplus_{i=1}^s E_i,\,\  
E_i\,=\, {\mathcal O}_{{\mathbb P}^1}(a_i)^{\oplus r_i}\, ,\, 
~ a_1< \cdots < a_s\, .
$$
Two generalized parabolic sheaves
$(E, \{F^1_j(E)\})$ and $(E,\{F^2_j(E)\})$ 
are isomorphic if and only if they differ by an automorphism
of $E$. Isomorphic generalized parabolic sheaves
give isomorphic vector bundles on $Y$. The automorphism group 
${\rm Aut}(E)$ of $E$ is a nonempty Zariski open subset of the vector 
space $Hom(E,E)$. Hence the dimensions of ${\rm Aut}(E)$
and $Hom(E,E)$ coincide. Now 
$$
Hom(E,E)= \bigoplus_{i=1}^s Hom(E_i,E_i) \oplus
\bigoplus_{i<j} Hom(E_i,E_j) \oplus \bigoplus_{i>j} Hom(E_j,E_j)\, .$$ 
Since $Hom({\mathcal O}_{{\mathbb P}^1}(a_i), 
{\mathcal O}_{{\mathbb P}^1}(a_j))=0$ for $i>j$, one has 
dim $Hom(E_i,E_j)=0$ for $i>j$. Also $Hom(E_i,E_i)=M_{r_i}(k)$.

Let $i<j$. Then we have 
$$Hom(E_i,E_j) \,=\,  
Hom({\mathcal O}_{{\mathbb P}^1}(a_i),{\mathcal O}_{{\mathbb P}^1}(a_j))^{\oplus
r_ir_j} \,=\, Hom({\mathcal O}_{{\mathbb P}^1}(a_j-a_i))^{\oplus r_ir_j}\, .
$$ 
Hence $\dim Hom(E_i,E_j)\,=\, r_i r_j (a_j-a_i+1)$. 
Note that $(a_j-a_i+1)\ge 2$. Since $d(E)\,=\,0$, we have $\sum_i a_i 
r_i\,=\,0$. If $E$ is not the trivial vector bundle, then there exist 
$a_i, a_j$, $i<j$, 
such that $a_i\le -1, a_j\ge 1$; therefore, for these $i,j$, one has 
$a_j-a_i+1 \ge 3$,
and hence $\dim Hom(E_i,E_j)\,>\,2r_ir_j$. 
Consequently, $\dim \bigoplus_{i<j} Hom(E_i,E_j)\,>\, 2\sum_{i<j} r_i r_j$. 

Thus we have
$$\dim Hom(E,E)> \sum_i r_i^2 + 2\sum_{i<j} r_i r_j 
= \sum_{i,j} r_i r_j = (\sum_ i r_i)^2 = n^2\, .$$ 

On the other hand, for the trivial vector bundle 
of rank $n$, we have $\dim Hom(E,E)\,=\, n^2$. 

    The moduli space $M_{{\mathbb P}^1,D_{g_X}}(n,0)$
of generalized parabolic sheaves has dimension $n^2(g_X-1)+1$. 
The locus in $M_{{\mathbb P}^1,D_{g_X}}(n,0)$
of generalized parabolic sheaves
with underlying vector bundle $E$ has dimension 
$$\delta_E = n^2 g_X - \ {\rm dim \ (Aut} \ E)\, .$$
The above computations show that $\delta_E = n^2(g_X-1)+1$ if $E$ is the  trivial vector bundle of rank $n$ and $\delta_E < n^2(g_X-1)+1$ if $E$ is not the trivial vector bundle. By Lemma \ref{l4.1}, 
there are only finitely many possible choices of the
underlying vector bundle. It follows that there is a nonempty open subset of 
$M_{{\mathbb P}^1,D_{g_X}}(n,0)$ consisting of generalized parabolic sheaves
with underlying vector bundle $E$ trivial i.e., there a nonempty open subset of the moduli space of stable bundles of rank $n$ 
and degree zero on $X$ consisting of stable vector bundles $F$ with 
pullback $E= p^* F$ the trivial vector bundle of rank $n$. 
\end{proof}

\begin{theorem}\label{t4.3}
Let $X$ be a rational curve of genus $g_X\ge 2$. Then 
there is a nonempty Zariski open subset $S \subset U_{X}^{'s}(n,0)$
such that for any $F\in S$, the pullback $E= p^* F$ 
is the trivial vector bundle of rank $n$. 
Moreover, $$S\,\cong\,({\rm GL}_n(k)\times \cdots \times {\rm 
GL}_n(k))/\!\!/ {\rm PGL}_n(k)\, .$$
\end{theorem}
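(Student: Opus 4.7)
The plan proceeds in three steps: produce $S$ from Proposition \ref{p4.2}, coordinatize $S$ by tuples of matrices via the generalized parabolic sheaf picture of Subsection \ref{gps}, and identify the resulting parameter space with the claimed GIT quotient.

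For the existence of $S$, Proposition \ref{p4.2} already exhibits a nonempty locus in $U_X^{'s}(n,0)$ of stable bundles $F$ with $p^*F$ trivial. Triviality of a rank $n$, degree $0$ bundle on ${\mathbb P}^1$ is a Zariski open condition (it is cut out by $h^0 = n$, which is open by semicontinuity), so this locus, which we take as $S$, is Zariski open in $U_X^{'s}(n,0)$.

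Next I would coordinatize $S$ explicitly. For $F \in S$, the corresponding GPS under the birational morphism $\Phi$ of \eqref{Phi} is $(E,\{F_j(E)\}_{j=1}^{g_X})$ with $E = {\mathcal O}_{{\mathbb P}^1}^{\oplus n}$, and each $F_j(E)$ is the graph of the gluing isomorphism $g_j \colon E_{x_j} \stackrel{\sim}{\longrightarrow} E_{z_j}$ used to reconstruct $F$. Choosing a global trivialization of $E$ identifies each $\mathrm{Isom}(E_{x_j}, E_{z_j})$ with $\mathrm{GL}_n(k)$, so $F$ is encoded by a tuple $(g_1,\ldots,g_{g_X}) \in \mathrm{GL}_n(k)^{g_X}$. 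A change of trivialization is an element $A \in \mathrm{Aut}(E) = \mathrm{GL}_n(k)$, and transforms the tuple by simultaneous conjugation $g_j \mapsto A g_j A^{-1}$; the center of $\mathrm{GL}_n(k)$ acts trivially, so the effective symmetry group is $\mathrm{PGL}_n(k)$.

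Finally, to upgrade this bijection to a scheme-theoretic isomorphism, one observes that the locus in $M_{{\mathbb P}^1,D_{g_X}}(n,0)$ of GPS with trivial underlying bundle is Zariski open, and on this locus $\Phi$ is an isomorphism onto its image, because each $F_j(E)$ being the graph of an isomorphism is precisely the condition for $\Phi$ to produce a vector bundle rather than merely a torsion-free sheaf. The anticipated main obstacle lies in this last GIT step: identifying the affine quotient $\mathrm{GL}_n(k)^{g_X}/\!\!/\mathrm{PGL}_n(k)$ with $S$, i.e., matching stability of the GPS with GIT-stability of the tuple for a suitable linearization. Since stable bundles have only scalar automorphisms, $\mathrm{PGL}_n(k)$ acts freely on the open subset of $\mathrm{GL}_n(k)^{g_X}$ parametrizing stable GPS, and the orbits are closed there; the dimensions match ($n^2 g_X - (n^2-1) = n^2(g_X-1)+1 = \dim M_{{\mathbb P}^1,D_{g_X}}(n,0)$), so the universal property of the coarse moduli space yields the claimed isomorphism $S \cong \mathrm{GL}_n(k)^{g_X}/\!\!/\mathrm{PGL}_n(k)$.
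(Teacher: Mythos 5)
Your proposal is correct and follows essentially the same route as the paper: invoke Proposition \ref{p4.2} for the generic triviality of $p^*F$, encode a bundle with trivial pullback as the tuple of gluing isomorphisms $(f_1,\dots,f_{g_X})\in \mathrm{GL}_n(k)^{g_X}$ of its generalized parabolic sheaf modulo the conjugation action of $\mathrm{Aut}(E)=\mathrm{GL}_n(k)$ (factoring through $\mathrm{PGL}_n(k)$), and transfer the identification to $U_X^{'s}(n,0)$ using that $\Phi$ in \eqref{Phi} is an isomorphism over the locus of stable vector bundles. Your added remarks on openness of the trivial-splitting locus and on matching GPS-stability with GIT-stability only make explicit points the paper leaves implicit.
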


\begin{proof}
By Proposition \ref{p4.2}, 
for a general stable bundle $F$ of rank $n$ 
and degree zero on $X$, the pullback $E= p^* F$ 
is a trivial vector bundle of rank $n$. 
Recall that for the GPS 
$(E, \{F_j(E)\})\in M_{{\mathbb P}^1,D_{g_X}}(n,0)$, 
for every $j=1,\cdots ,g_X$, the vector space $F_j(E)$ is the graph of 
an isomorphism $f_j\, : E_{x_j} \,\longrightarrow\, E_{z_j}$.
Therefore, $$(f_1,\cdots ,f_{g_X})\, \in\, 
{\rm GL}_n(k)\times \cdots \times{\rm GL}_n(k)$$ (as
$E$ is the trivial vector bundle). In terms of this identification,
the action of the automorphism group ${\rm Aut}(E)$ on the space
of all GPS with the underlying vector bundle $E$ is
the diagonal conjugation action of ${\rm PGL}_n(k)$ on
${\rm GL}_n(k)\times \cdots \times {\rm GL}_n(k)$, i.e., the action 
$$
g\circ (f_1, \cdots, f_{g_X})= (g^{-1}f_1 g, \cdots, g^{-1} f_{g_X} g),
~\,\,~ g\in \ {\rm PGL}_n(k)\, .
$$ 
Let $S''$ be the open subset of 
$M_{{\mathbb P}^1,D_{g_X}}(n,0)$ corresponding to all GPS whose 
underlying
vector bundle is trivial and the sheaf on $X$ corresponding to
it is a vector bundle. 
Note that $F_j(E)$ is the graph of an isomorphism 
if and only if the projections of $F_j(E)$ to
$E_{x_j}$ and $E_{z_j}$ are both isomorphisms. Therefore,
it follows that $S''$ is isomorphic to
$$
({\rm GL}_n(k)\times \cdots \times {\rm GL}_n(k))/\!\!/
{\rm PGL}_n(k)\, .
$$ 
Since the map $\Phi$ in \eqref{Phi} is an isomorphism over stable 
bundles, it follows that there is an open subset $S \subset 
U_{X}^{'s}(n,0)$ such that $S''$ maps isomorphically onto $S$.
\end{proof}

In view of Theorem \ref{t4.3}, 
to prove that $U^{'s}_X(n,0)$ is rational (respectively, stably rational) 
it suffices to check that the quotient
$({\rm GL}_n(k)\times \cdots \times {\rm GL}_n(k))/\!\!/ {\rm PGL}_n(k)$ 
is (respectively, stably rational). From
Remark \ref{reduction} we conclude that if the following well-known 
question has an affirmative answer, then Theorem \ref{rational21} 
(respectively, Theorem \ref{stablyrational}) in fact holds for arbitrary 
$h$.

\begin{question}\label{c4.3}
For ${\rm PGL}_n(k)$ acting diagonally on ${\rm GL}_n(k)\times \cdots \times 
{\rm GL}_n(k)$ by the conjugation action, is the quotient 
$({\rm GL}_n(k)\times \cdots \times {\rm GL}_n(k))/\!\!/ {\rm PGL}_n(k)$ 
rational (respectively, stably rational)?
\end{question}

Note that ${\rm GL}_n(k)\times \cdots \times {\rm GL}_n(k)$ 
is an open subset of ${\rm M}_n(k)\times \cdots \times {\rm M}_n(k)$ 
and the conjugation action of ${\rm PGL}_n(k)$ extends to 
a conjugation action on ${\rm M}_n(k)\times \cdots \times {\rm M}_n(k)$. 
Thus $({\rm GL}_n(k)\times \cdots \times {\rm GL}_n(k))/\!\!/ {\rm PGL}_n(k)$ 
is an open subset of 
$({\rm M}_n(k)\times \cdots \times {\rm M}_n(k))/\!\!/ {\rm PGL}_n(k)$.
The question of rationality and stable rationality of the latter 
has been studied extensively by many algebraists and algebraic geometers. 
The answer is known in some cases but the general case is still open. 

\subsection{Generic matrices}
Let $K$ be a field of characteristic $0$, $n$ a positive integer 
and $m \in {\mathbb N}$. Let 
$ \{x_{ij}^m \mid 1 \le i,j \le n \}$ be independent commuting 
indeterminates over $K$. The matrix $X_m$ whose $i,j$th entry is 
$x_{i,j}^m$ is called an $n\times n$ generic matrix over $K$. 
Thus $X_m = (x_{i,j}^m) \in M_n(K[x_{i,j}^m])$ (\cite{F3}, \cite{S}). 
The $K$-subalgebra of $M_n(K[x_{i,j}^m])$ generated by the 
$X_m$ is called the ring of generic matrices. 
Subrings of this ring are closely related to the 
invariants for the action of ${\rm GL}_n$ on the products of $M_n$'s.

\begin{proposition}\label{c4.5}
Let $Y$ be a rational nodal curve of genus $2$.
If $h\,\le\, 4$, then $U_Y(n,d)$ is rational.
\end{proposition}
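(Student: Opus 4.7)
The plan is to reduce rationality of $U_Y(n,d)$ to that of a simultaneous conjugation quotient and then to invoke the classical theory of generic matrices. By the standing assumption $h\ge 2$ imposed earlier in the section (and since the case $h=1$ is in any event covered by Theorem~\ref{rational1}), I restrict attention to $h\in\{2,3,4\}$. By Remark~\ref{reduction}, $U_Y(n,d)$ is birational to $U_Y(h,0)\times\mathbb{A}^{(n^2-h^2)(g_Y-1)}$, so it suffices to show that $U_Y(h,0)$ is rational.

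Next I apply Theorem~\ref{t4.3} to the curve $X=Y$ of arithmetic genus $g_X=2$, with the rank taken to be $h$ in place of $n$. This produces a nonempty Zariski open subset $S\subset U^{'s}_Y(h,0)$ with
$$
S \,\cong\, \bigl(\mathrm{GL}_h(k)\times\mathrm{GL}_h(k)\bigr)/\!\!/\mathrm{PGL}_h(k),
$$
where $\mathrm{PGL}_h(k)$ acts by simultaneous conjugation; the two factors of $\mathrm{GL}_h(k)$ come from the two isomorphisms $f_j\colon E_{x_j}\to E_{z_j}$ attached to the $g_X=2$ nodes of $Y$. Hence $U_Y(h,0)$ is birational to this quotient. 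Since $\mathrm{GL}_h(k)\times\mathrm{GL}_h(k)$ is a $\mathrm{PGL}_h(k)$-invariant dense open subset of $M_h(k)\times M_h(k)$, $S$ is in turn a dense open subset of
$$
T \,:=\, \bigl(M_h(k)\times M_h(k)\bigr)/\!\!/\mathrm{PGL}_h(k),
$$
and it therefore suffices to prove that $T$ is rational for $h\le 4$.

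I now invoke the classical theory of generic matrices alluded to in the preceding subsection: the function field $k(T)$ is the centre of the generic division algebra on two $h\times h$ generic matrices, whose rationality is well known in low rank. The case $h=2$ is elementary; the cases $h=3$ and $h=4$ are theorems of Formanek \cite{F3}. Combined with the reductions above, this yields rationality of $U_Y(n,d)$ for every admissible $h$. The main obstacle — and the reason for the bound $h\le 4$ — is precisely this last step: the rationality of the centre of the generic division algebra is presently known only up to $h=4$, the case $h\ge 5$ being famously open, so no strengthening of the proposition can be obtained by the present method.
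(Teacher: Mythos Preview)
Your proof is correct and follows essentially the same approach as the paper: reduce via Remark~\ref{reduction} (equivalently Theorem~\ref{birational}) and Theorem~\ref{t4.3} to the rationality of $(M_h\times M_h)/\!\!/\mathrm{PGL}_h$ for $h\le 4$, then invoke the classical results on generic matrices. The only cosmetic difference is that the paper cites Formanek's original papers \cite{F1}, \cite{F2} for $h=3,4$ rather than the survey \cite{F3}, and cites \cite{Re} for the elementary case $h=2$.
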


\begin{proof}
For $n=2$, it is well known that the function field 
$${\mathbb C}(M_2\times M_2)^{{\rm PGL}_2}\,=\, 
{\mathbb C}(Tr(X), Tr(Y), Tr(X^2), Tr(Y^2), Tr(XY))\, ,$$ 
where $X$ and $Y$ are generic matrices corresponding to 
the two components and hence is a rational function field
(cf. \cite{Re}). Formanek proved the rationality 
of ${\mathbb C}(M_n\times M_n)^{{\rm PGL}_n}$ for $n=3$ \cite{F1}, 
and $n=4$ \cite{F2}. By Theorem \ref{t4.3}, 
this proves that $U^{'s}_X(h,0)$ is rational for $h\le 4$. 
The proposition now follows from Theorem \ref{birational}.
\end{proof}

This finishes the proof of Theorem \ref{rational21}.

\subsection{Stable Rationality and Retract Rationality}

We recall that a variety $Z$ is called \textit{stably rational} if 
$X \times {\mathbb P}^m$ is rational for some positive integer $m$.

\begin{definition}\label{retract}
{\rm Let $f\,: \,X \,\dasharrow \, Y$ be a dominant rational map of irreducible algebraic varieties. 
Then $Y$ is called} retract of $X$ via $f$ {\rm if for every (not necessarily 
irreducible) subvariety $X_0 \neq X$, there is a rational section 
$s\, :\, Y\, \dasharrow \, X$ of $f$ such that the image $f(Y)$ is not contained in $X_0$.}

{\rm The variety $Y$ is called a} retract {\rm of $X$ if $Y$ is retract of $X$ via some
rational dominant map.}
\end{definition}

\begin{remark}
{\rm If we view $X$ as a variety over $k(Y)$, then Definition 
\ref{retract} can be restated as follows: the variety $Y$ is retract of $X$ if and only if 
$k(Y)$--points are dense in $X$.}

{\rm The property of being retract depends only on the field extension of 
the function fields of $X$ and $Y$. In other words, it does not change if $X$ and
$Y$ are replaced by birationally isomorphic varieties.}   
\end{remark}

\begin{definition}\label{retractrational}
{\rm A variety $Z$ is called} retract rational {\rm if $Z$ is a retract of ${\mathbb A}^m_k$,
and also ${\mathbb A}^m_k$ is a retract of $Z$ for some positive integer $m$.} 
\end{definition}

\begin{theorem} (Theorem \ref{stablyrational})
Let $Y$ be a rational nodal curve with $g_Y\,\ge\, 1$. 
\begin{enumerate}
	\item For $h$ dividing $420= 3.4.5.7$, 
the moduli space $U_Y(n,d)$ is stably rational.
  \item For $h$ a prime number, $U^{'s}_Y(n,d)$ is retract rational. 
\end{enumerate}
\end{theorem}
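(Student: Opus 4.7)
The plan is to reduce the problem to the well-studied question of (stable or retract) rationality of the categorical quotient of $g_Y$ copies of $M_h(k)$ by the simultaneous conjugation action of $\mathrm{PGL}_h(k)$, and then invoke classical results on the centre of the ring of generic matrices. By Remark \ref{reduction}, the moduli space $U_Y(n,d)$ is birational to $U_Y(h,0)\times \mathbb{A}^{(n^2-h^2)(g_Y-1)}$, and since both stable rationality and retract rationality are birational invariants preserved under product with affine space, it will suffice to establish part (1) for $U_Y(h,0)$ and part (2) for $U_Y^{'s}(h,0)$. The case $g_Y=1$ is already covered by Lemma \ref{l-3.1}, and the case $h=1$ by Theorem \ref{rational1}, so I may assume $g_Y,h\geq 2$. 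Specialising Theorem \ref{t4.3} to $X=Y$ (so that the $g_X$ there equals our arithmetic genus $g_Y$) provides a dense open subset of $U_Y^{'s}(h,0)$ isomorphic to $(\mathrm{GL}_h(k)^{g_Y})/\!\!/\mathrm{PGL}_h(k)$, and since $\mathrm{GL}_h(k)$ is Zariski open in $M_h(k)$, this quotient is birational to $(M_h(k)^{g_Y})/\!\!/\mathrm{PGL}_h(k)$, whose function field is the centre $C(h,g_Y)$ of the algebra of $g_Y$ generic $h\times h$ matrices.

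For part (1), my plan is to assemble stable rationality of $C(h,g_Y)$ from three ingredients. First, Formanek's explicit computations give rationality (hence stable rationality) for $h\in\{2,3,4\}$, as already used in the proof of Proposition \ref{c4.5}. Second, Bessenrodt--Le Bruyn \cite{BB} prove stable rationality for $h=5$ and $h=7$. Third, the same reference supplies the multiplicativity principle that if $\gcd(a,b)=1$ and both $C(a,g_Y)$ and $C(b,g_Y)$ are stably rational, then so is $C(ab,g_Y)$. Since every divisor of $420=3\cdot 4\cdot 5\cdot 7$ admits a coprime factorisation into elements of $\{2,3,4,5,7\}$, the three ingredients combine to yield stable rationality of $C(h,g_Y)$, and hence of $U_Y(h,0)$, for every $h$ dividing $420$.

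For part (2), I would invoke Saltman's theorem, which states that for every prime $p$ and every $m\geq 2$, the centre $C(p,m)$ of the algebra of $m$ generic $p\times p$ matrices is retract rational. Applying this with $p=h$ and $m=g_Y\geq 2$, the birational identification of the first paragraph delivers retract rationality of $U_Y^{'s}(h,0)$, and then Remark \ref{reduction} transfers this to $U_Y^{'s}(n,d)$. The main difficulty here is not really technical; it is rather a matter of locating and correctly applying the right external inputs from invariant theory. The only items that have to be checked beyond the quoted theorems are that retract rationality is a birational invariant and survives product with affine space (because $\mathbb{A}^m$ is itself retract rational), and that the hypothesis $m\geq 2$ in Saltman's theorem is matched by our standing assumption $g_Y\geq 2$; both are immediate.
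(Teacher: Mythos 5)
Your proposal is correct and follows essentially the same route as the paper: reduce via Remark \ref{reduction} and Theorem \ref{t4.3} to the $\mathrm{PGL}_h$-quotient of $g_Y$ copies of $M_h(k)$, then quote Bessenrodt--Le Bruyn for divisors of $420$ and Saltman's retract rationality of the centre of the generic division algebra of prime degree. The only cosmetic difference is that you unpack \cite[Theorem 2]{BB} into its ingredients (Formanek for $h\le 4$, the $h=5,7$ cases, and coprime multiplicativity), whereas the paper cites that theorem directly for the almost free $\mathrm{PGL}_h$-representation $M_h(k)^{\oplus g_Y}$.
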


\begin{proof}
Bessenrodt and Le Bruyn proved that 
${\mathbb C}(M_n\times M_n)^{PGL_n}$ is stably 
rational for $n=5,7$ (\cite[Theorem 1]{BB}, see also \cite{B}). They
used this to deduce that if $V$ is a finite dimensional complex 
representation of ${\rm PGL}_n$ such that ${\rm PGL}_n$ acts almost freely 
(i.e., there is a Zariski open subset of V such that every point 
of this open set has a trivial stabilizer), then for any $h$ 
dividing $420= 3.4.5.7$, the field of invariants 
${\mathbb C}(V)^{{\rm PGL}_n}$ is stably rational \cite[Theorem 2]{BB}. 

It is easy to see that the action of ${\rm PGL}_h$ 
on $M_h\times M_h$ is almost free and hence the action 
on $M_h\times \cdots \times M_h$ (a $g$-fold product with $g\ge 2$) 
is also almost free. Hence by \cite[Theorem 2]{BB}, for any $h$ 
dividing $420= 3.4.5.7$, the field of invariants 
${\mathbb C}(M_h \times \cdots \times M_h)^{{\rm PGL}_h}$ is stably
rational. The first part of the theorem now follows by the
argument in the proof of Proposition \ref{c4.5}.

      Using \cite[Corollary 5.3, p. 211]{S1}, one sees that   
if $h$ is a prime number then the moduli space $U^{'s}_Y(h,0)$ is retract rational. It now follows from Theorem \ref{birational}(1) that for $h$ a prime number, $U^{'s}_Y(n,d)$ is retract rational. 

\end{proof}

\end{document}